\newcommand{\al}{\alpha}
\newcommand{\be}{\beta}
\newcommand{\ga}{\gamma}
\newcommand{\ep}{\varepsilon}
\newcommand{\la}{\lambda}
\renewcommand{\phi}{\varphi}
\newcommand{\si}{\sigma}
\newcommand{\Ga}{\Gamma}
\newcommand{\Si}{\Sigma}
\newcommand{\ZZ}{{\mathbb Z}}
\newcommand{\RR}{{\mathbb R}}
\newcommand{\cG}{\mathcal G}
\newcommand{\lto}{\longrightarrow}
\newcommand{\lk}{\operatorname{\ell{\mathit k}}}
\newcommand{\sig}{\operatorname{sig}}
\newcommand{\Int}{\operatorname{int}}
\newcommand{\lb}{\!\! \left\bracevert \!}
\newcommand{\rb}{\! \right\bracevert \!\!\!}
\newcommand{\sm}{\smallsetminus}
\newcommand{\co}{\colon}
\newcommand*\wbar[1]{%        % widebar 
  \hbox{ \kern-0.2em%
    \vbox{%
      \hrule height 0.5pt  % The actual bar
      \kern0.25ex%         % Distance between bar and symbol
      \hbox{%
        \kern-0.15em%       % Shortening on the left side
        \ensuremath{#1}%
        \kern-0.05em%       % Shortening on the right side
      }%
    }%
  \kern0.05em}%
}
\newtheorem{theorem}{Theorem}%[section] 
\newtheorem{lemma}[theorem]{Lemma}
\newtheorem{proposition}[theorem]{Proposition}
\newtheorem{corollary}[theorem]{Corollary}
\theoremstyle{definition}     % this and the line below gives definitions in roman
\newtheorem{definition}[theorem]{Definition}
\theoremstyle{remark}
\newtheorem{remark}[theorem]{Remark}
\title[A characterization of alternating links in thickened surfaces]{A characterization of alternating links in thickened surfaces}
\author[H. U. Boden]{Hans U. Boden}
\address{Mathematics \& Statistics, McMaster University, Hamilton, Ontario}
\email{boden@mcmaster.ca}
\thanks{The first author was partially funded by the Natural Sciences and Engineering Research Council of Canada.}
\author[H. Karimi]{Homayun Karimi}
\address{Mathematics \& Statistics, McMaster University, Hamilton, Ontario}
\email{karimih@math.mcmaster.ca}
\subjclass[2020]{Primary: 57K10, Secondary: 57K12}
\keywords{Links in thickened surfaces, alternating link, virtual link, checkerboard coloring, spanning surface, Gordon-Litherland pairing, definite surface}
\begin{document}

\begin{abstract}
We use an extension of Gordon-Litherland pairing to thickened surfaces to give a topological characterization of alternating links in thickened surfaces. If $\Si$ is a closed oriented surface and 
$F$ is a compact unoriented surface in $\Si \times I$, then the Gordon-Litherland pairing defines a symmetric bilinear pairing on the first homology of $F$. 
A compact surface in $\Si \times I$ is called \textit{definite} if its Gordon-Litherland pairing is a definite form. We prove that a link $L$ in a thickened surface is non-split, alternating, and of minimal genus if and only if it bounds two definite surfaces of opposite sign. 
\end{abstract}

\maketitle
%\tableofcontents
%%%%%%%%%%%%%%%%%%%%%%%%%%%%%%%%%%%%%%%%%%%%%%%%%%%%%%%%%%%%%%%%%%%%%%%%%%%%%%%%%%%%%%%%%%%%%%%%%%%%%%%%%
\setcounter{section}{1} \noindent
\subsection{Introduction} \label{S1}  
Alternating links in $S^3$ can be characterized as precisely those links which simultaneously bound both positive and negative definite spanning surfaces. This beautiful result was established recently  
by Greene in \cite{Greene}, and Howie obtained a similar characterization of alternating links in $S^3$ in terms of spanning surfaces in \cite{Howie}. 

These results have been extended to almost alternating knots by Ito in \cite{Ito-2018}, and to toroidally alternating knots by Kim in \cite{Kim-2019}.

In \cite{Greene}, Greene showed that, in general, $Y=S^3$ is the only $\ZZ/2$ homology 3-sphere containing a link that bounds both positive and negative definite surfaces (and that any such link in $S^3$ is alternating). 

In this paper, we study links in thickened surfaces, and we present a generalization of the results of Greene and Howie, giving a topological characterization of alternating links in thickened surfaces.

Let $\Si$ be a compact, connected, oriented surface and $I = [0,1]$, the unit interval.  
A link in $\Si \times I$ is an embedding $L \colon \bigsqcup_{i=1}^m S^1 \hookrightarrow \Si \times I$. In the following, we identify this embedding with its image and consider links $L \subset \Si \times I$ up to orientation-preserving homeomorphisms of the pair $(\Si \times I, \Si \times \{0\})$. 

A link $L$ in a thickened surface $\Si \times I$ can be represented by its link diagram, which is the tetravalent graph $D$ on $\Si$ obtained under projection $p\co \Si \times I \to \Si$. The arcs of $D$ are drawn to indicate over and under crossings near vertices in the usual way. The link diagram $D$ on $\Si$ is said to be \textit{alternating} if its crossings alternate from over to under around each component of the link. A link $L$ in  $\Si \times I$ is  alternating if it admits an alternating link diagram on $\Si$ (see \Cref{figure-1}).

\begin{figure}[ht]
\centering
\includegraphics[scale=1.00]{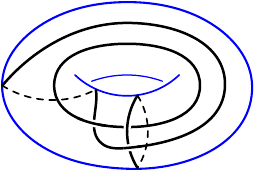}\hspace{.8cm} 
\includegraphics[scale=1.00]{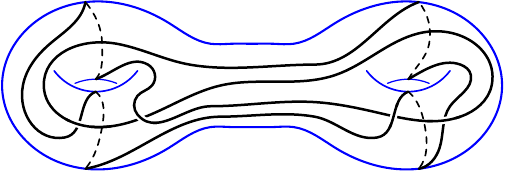} 
\vspace{-1mm}
\caption{\small An alternating link in the thickened torus and an alternating knot in a thickened surface of genus two.}\label{figure-1}
\end{figure}

Given a compact unoriented surface $F$ in $S^3$, Gordon and Litherland defined a symmetric bilinear pairing on $H_1(F)$. In the case that $F$ is a spanning surface for a link $L \subset S^3$, the signature of $L$ can be computed in terms of the signature of the pairing together with a correction term. 
The Gordon-Litherland pairing is extended to $\ZZ/2$ homology 3-spheres in \cite{Greene} and to thickened surfaces $\Si \times I$ in \cite{Boden-Chrisman-Karimi-2021}. Our characterization of alternating links in $\Si \times I$ is phrased in terms of the Gordon-Litherland pairing on spanning surfaces for the link.

In this paper, we are mainly interested in links in $\Si \times I$ which are $\ZZ/2$-null-homologous, or equivalently links in $\Si \times I$ that admit spanning surfaces. A  link $L$ in $\Si \times I$ is said to be \textit{split} if it can be represented by a disconnected diagram $D$ on $\Si$.
A link diagram $D$ on $\Si$ is said to be \textit{cellularly embedded} if $\Si \sm D$ is a union of disks.

Let $L$ be a link in $\Si \times I$ with alternating diagram $D$. 
We further assume that $D$ is cellularly embedded.
This implies that $D$ is connected, and therefore that $L$ is non-split (see \Cref{cor-min-g}).
It follows that the complementary regions of $\Si \sm D$ admit a checkerboard coloring, and that the black and white regions form spanning surfaces for $L$ which we denote $B$ and $W$. A straightforward argument will show that $B$ is negative definite and $W$ is positive definite with respect to their associated Gordon-Litherland pairings (see \Cref{minimal-genus}).  

Our main result is a converse to this statement given by the following theorem. The definition of minimal genus is given below.

\begin{theorem} \label{thm:main}
Let $L$ be a  link in $\Si\times I$, and assume that $L$ bounds a positive definite spanning surface and a negative definite spanning surface. Then $L \subset \Si \times I$  is  a non-split, alternating link of minimal genus.
\end{theorem}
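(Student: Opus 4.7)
The plan is to adapt the lattice-theoretic arguments of Greene \cite{Greene} and Howie \cite{Howie} to the thickened surface setting, using the Gordon-Litherland pairing of \cite{Boden-Chrisman-Karimi-2021}. Let $F_+$ and $F_-$ be positive and negative definite spanning surfaces for $L$ respectively, and let $\widehat{F} = F_+ \cup_L F_-$ be the closed surface obtained by gluing along the link in $\Si \times I$. The first step is to combine the Gordon-Litherland formula for $\sig(L)$ (applied to both $F_+$ and $F_-$) with a Mayer-Vietoris analysis of $\widehat F$ inside $\Si \times I$, to produce a numerical identity relating $\chi(F_\pm)$, their Euler numbers, and $\sig(L)$. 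Together with the definiteness hypotheses, this should yield an inequality bounding $-\chi(F_+) - \chi(F_-)$ below by a quantity depending only on $L$, for instance of the form $c(D) - 2g(\Si)$ for any diagram $D$ of $L$, with equality forcing $D$ to be alternating and $F_\pm$ to be its black and white checkerboard surfaces.

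The central step is to upgrade this algebraic extremality into geometric combinatorial data, namely a cellularly embedded alternating diagram on $\Si$. Following Greene, the idea is that a definite Gordon-Litherland lattice of minimal rank is extremely rigid: its short vectors should organize into the complementary regions of a tetravalent graph, and the pair $(F_+, F_-)$ should be \emph{consistent} in the sense that they arise from the two color classes of a common checkerboard coloring. I would make this precise by a lattice embedding and exchange argument, replacing Greene's planarity assumption with embeddability into $\Si$. The sign conventions and definiteness then force each crossing of the resulting tetravalent graph to alternate, yielding the desired alternating diagram with checkerboard surfaces $F_\pm$.

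Once $L$ is shown to admit a cellularly embedded alternating diagram $D$ on $\Si$, the minimal genus conclusion follows either from a general statement that cellularly embedded alternating diagrams realize minimal genus (of the type developed in \cite{Boden-Chrisman-Karimi-2021}), or more directly from the Euler characteristic equality attained by $F_\pm$, which obstructs $L$ from embedding in any $\Si' \times I$ with $g(\Si') < g(\Si)$.

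The main obstacle will be the central step. In $S^3$, Greene recovers a planar graph from a pair of consistent definite lattices using the combinatorial rigidity of planarity; in our setting, one must instead construct a graph cellularly embedded in $\Si$ and the argument must respect $H_1(\Si)$. In particular, when the link fails to be $\ZZ/2$-null-homologous in any proper subsurface, extra care is needed to ensure that the extracted diagram is genuinely cellular rather than being trapped in a proper subsurface or projecting to a disconnected graph (which would contradict non-splitness). This interplay between the lattice-theoretic construction and the topology of $\Si$, together with the fact that $H_1(\Si \times I)$ contributes new classes to the Mayer-Vietoris sequence that are absent in $S^3$, is the substantive new difficulty beyond Greene's proof.
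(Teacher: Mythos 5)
Your proposal identifies the right endpoints — definiteness should force an extremal Euler characteristic identity, and equality should yield an alternating diagram whose checkerboard surfaces are $P$ and $N$ — but the ``central step'' that you flag as the main difficulty is precisely the step you never carry out, and the route you propose for it (a lattice embedding and exchange argument) does not match how either Greene or the paper actually proves the characterization. Greene's lattice-theoretic machinery (flow lattices, embeddings of definite lattices, short vectors) is the engine behind his \emph{Tait conjecture} applications (his Theorem 1.2 and Theorem 5.5), not behind his characterization Theorem 1.1. For the characterization itself, both Greene and the present paper use a geometric cut-and-paste argument, and that is where all the work lies.

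Concretely, the paper's proof of \Cref{thm:main} (via \Cref{thm:main2}) needs the following geometric ingredients, none of which appear in your sketch: (1) \Cref{lemma3-4}, which isotopes $P$ and $N$ so that $P \cap N \cap X_L$ contains no simple closed curve — this requires a delicate case analysis using \Cref{lemma3-3}, the degree argument via Kneser's theorem, and the signature computation from \cite{cooper} and \cite{Boden-Chrisman-Karimi-2021}; (2) the observation, from the framing difference $\tfrac{1}{2}\lb[K_i]\rb_P - \tfrac{1}{2}\lb[K_i]\rb_N$, that every double arc of $P\cap N\cap X_L$ is \emph{standard} rather than \emph{parallel}, which is what allows the local crossing models to be inserted; (3) the construction of a closed surface $S$ with $\nu(S)\approx\nu(P\cup N)$ by collapsing the standard arc neighborhoods; (4) the proof that $S$ is incompressible and $\chi(S)=\chi(\Si)$, which uses that $P$ and $N$ lie in \emph{different} $S^*$-equivalence classes (itself a consequence of irreducibility of $\Si\times I$ and minimal genus); and (5) Waldhausen's theorem to isotope $S$ to a level surface $\Si\times\{t_0\}$, after which projecting $L$ along $\nu(S)$ produces a diagram with checkerboard surfaces $P$ and $N$, to which \Cref{prop-BW} applies. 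Your Mayer–Vietoris calculation roughly parallels step (4), but without steps (1)–(3) there is no surface $S$ to compute with, and without step (5) there is no way to realize $S$ as a projection surface carrying a diagram of $L$. A lattice-theoretic replacement for these steps would amount to reconstructing the embedded surface $\Si$ and the projection from the abstract bilinear form, and there is no indication in your sketch of how to do that — the pairing $\cG_F$ simply does not see enough of the ambient topology on its own. Separately, the minimal genus part of the statement is handled in the paper not by an Euler characteristic obstruction on $F_\pm$ as you suggest, but by showing $P\not\sim_{S^*}N$ and then invoking \Cref{nonsingular} and \Cref{minimal-genus}; your proposed obstruction argument would need to be made precise as well.
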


\Cref{thm:main} applies to give a characterization of alternating virtual links. 

Virtual links can be defined as virtual link diagrams up to the generalized Reidemeister moves \cite{KVKT}. 
One can also define them as stable equivalence classes of links in thickened surfaces.
Here, two links $L_0 \subset \Si_0\times I$ and $L_1 \subset \Si_1 \times I$ are said to be \textit{stably equivalent} if one is obtained from the other by a finite sequence of isotopies, homeomorphisms,\footnote{Here homeomorphism means orientation-preserving homeomorphisms of the pair $(\Si \times I, \Si \times \{0\}).$} 
 stabilizations, and destabilizations. We take a moment to explain stabilization.

Given a link $L \subset \Si \times I$, let $h \co S^0 \times D^2 \to \Si$ be the attaching region for a 1-handle which is disjoint from the image of $L$ under projection $p\co \Si \times I \to \Si$. Let 
$$\Si' = \Si \sm h(S^0 \times D^2) \cup_{S^0 \times S^1} D^1 \times S^1,$$ and let $L'$ be the image of $L$ in $\Si'\times I$ under the inclusions
$$L\hookrightarrow(\Si\sm h(S^0\times D^2))\times I\hookrightarrow\Si'\times I.$$
Then we say that $(\Si' \times I,L')$ is obtained from $(\Si \times I, L)$ by stabilization, and 
destabilization is the opposite procedure.

In \cite{Carter-Kamada-Saito}, Carter, Kamada, and Saito give a one-to-one correspondence between virtual links and stable equivalence classes of links in thickened surfaces.

The virtual genus of a virtual link is the minimum genus over all surfaces $\Si$ such that
$\Si \times I$ contains a representative for the virtual link. In that case, we say that the representative $L \subset \Si \times I$ has \textit{minimal genus}. If $L$ is non-split, and it has minimal genus, then any diagram $D \subset \Si$ for $L$ is necessarily cellularly embedded, since otherwise destabilization would produce a representative of the same virtual link on a lower genus surface. Kuperberg's theorem shows that every non-split virtual link has an irreducible representative which is unique up to orientation-preserving homeomorphism of the pair $(\Si \times I, \Si \times \{0\})$ \cite{Kuperberg}. In particular, it implies that two minimal genus representatives of the same non-split virtual link are equivalent under orientation-preserving homeomorphism of the pair $(\Si \times I, \Si \times \{0\}).$

We combine the results to give the following useful characterization of alternating virtual links.
% In \Cref{cor-min-g}, we show that any link $L \subset \Si \times I$ represented by an
% cellularly embedded alternating diagram has minimal genus

\begin{corollary} \label{cor:mainv}
A virtual link is non-split  and alternating if and only if it admits a representative $L$ in $\Si \times I$ which bounds a positive definite spanning surface and a negative definite spanning surface. 
\end{corollary}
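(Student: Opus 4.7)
The plan is to reduce the corollary directly to \Cref{thm:main} and \Cref{cor-min-g}, exploiting the fact that a virtual link corresponds (by Kuperberg's theorem) to a well-defined minimal genus representative in some $\Si \times I$, and that ``alternating'' for a virtual link means admitting an alternating diagram on some surface. Both directions of the equivalence are then essentially bookkeeping once the dictionary between virtual links and thickened-surface representatives is in place.

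For the forward direction, assume the non-split virtual link is alternating. I would pick a minimal genus representative $L \subset \Si \times I$, and first argue that $L$ may be chosen to admit an alternating diagram on $\Si$ itself: starting from any alternating diagram on a larger surface, destabilizations take place in regions disjoint from the diagram, so they preserve alternation, and by \Cref{cor-min-g} one can destabilize all the way down to minimal genus. Since $L$ is non-split and of minimal genus, the resulting alternating diagram $D$ on $\Si$ is cellularly embedded (a non-disk complementary region would provide a compressing handle, contradicting minimality). The standard checkerboard argument recalled in the introduction then produces black and white spanning surfaces $B, W$ whose Gordon--Litherland pairings are negative and positive definite respectively, supplying the two definite spanning surfaces required.

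For the backward direction, assume some representative $L \subset \Si \times I$ of the virtual link bounds a positive definite and a negative definite spanning surface. Then \Cref{thm:main} immediately yields that $L$ is alternating (and even that the chosen $\Si$ already realizes the virtual genus). Since the virtual link is, by definition, the stable equivalence class of $L$ and it inherits the alternating diagram from $\Si$, the virtual link is alternating.

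The main obstacle I anticipate is the first paragraph of the forward direction, namely justifying that an alternating virtual link admits a \emph{minimal genus} alternating representative with a \emph{cellularly embedded} alternating diagram. This requires knowing that destabilization preserves alternation and that, for non-split links, minimality forces cellular embedding; both facts rely on \Cref{cor-min-g} and on the standard observation that the handle used for stabilization/destabilization meets the diagram trivially. Once this is set up, the corollary is just an application of \Cref{thm:main} combined with the checkerboard construction.
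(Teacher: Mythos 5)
Your proposal is correct and follows the argument the paper implicitly intends (the paper states this corollary without an explicit proof): the forward direction reduces to \Cref{BWdefinite} after destabilizing an alternating diagram until it is cellularly embedded, and the converse is an immediate application of \Cref{thm:main}. The only wrinkle is expository rather than mathematical --- instead of fixing a minimal genus representative in advance and then arguing it carries an alternating diagram, it is slightly cleaner to build the cellularly embedded alternating representative directly by destabilization (which, as you note, leaves the diagram untouched and hence alternating) and then let \Cref{cor-min-g} confirm minimality, with Kuperberg's uniqueness theorem identifying it with the a priori chosen minimal genus representative.
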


\medskip\noindent
{\textbf{Notations and Conventions.}} Homology groups are taken with $\ZZ$ coefficients unless otherwise indicated.
Links in thickened surfaces are assumed to lie in the interior, and spanning surfaces are assumed to be connected but not necessarily oriented. 

%%%%%%%%%%%%%%%%%%%%%%%%%%%%%%%%%%%%%%%%%%%%%%%%%%%%%%%%%%%%%%%%%%%%%%%%%%%%%%%%%%%%%%%%%%%%%%%%%%%%%%%%%
\setcounter{section}{2} \noindent
\subsection{Gordon-Litherland pairing} \label{S2} 
In this section, we review the Gordon-Litherland pairing \cite{GL-1978} and its extension to links in thickened surfaces \cite{Boden-Chrisman-Karimi-2021}.
The pairing is defined for any link $L \subset \Si \times I$ that admits a \textit{spanning surface}, which is a compact, connected surface $F$ embedded in $\Si \times I$ with boundary $\partial F =L$. The surface $F$ may or may not be orientable, and here we consider it as an \textit{unoriented} surface. Not all links $L \subset \Si \times I$ admit spanning surfaces, and in fact Proposition 1.7 of \cite{Boden-Karimi} implies that $L$ admits a spanning surface if and only if $[L]$ is trivial in $H_1(\Si\times I;\ZZ/2).$

The link diagram $D$ is the decorated graph on $\Si$ obtained as the image of $L$ under the projection
$p\co \Si \times I \to \Si$. Then $D$ is called \textit{checkerboard colorable} if the complementary regions of $\Si \sm D$ can be colored black and white so that, whenever two regions share an edge, one is white and the other is black. A link $L$ in $\Si \times I$ is said to be checkerboard colorable if it admits a diagram which is checkerboard colorable. If $L$ is non-split and admits a checkerboard colored diagram, then the black and white regions determine unoriented spanning surfaces which we call \textit{checkerboard surfaces}. A straightforward argument shows that a link in $\Si \times I$ is checkerboard colorable if and only if it bounds an unoriented spanning surface.

Next, we recall the definition of the asymmetric linking for simple closed curves in a thickened surface.
Let $\Si$ be a compact, connected, oriented surface. The asymmetric linking pairing in $\Si \times I$ is taken relative to $\Si \times \{1\}$ and defined as follows. 
Given an oriented simple closed curve $J$ in the interior of $\Si \times I,$ then by Proposition 7.1 of \cite{Boden-Gaudreau-Harper-2016}, $H_1(\Si \times I \sm J, \Si \times \{1\})$ is infinite cyclic and
generated by a meridian $\mu$ of $J$. The meridian $\mu$ here is oriented as the boundary of a small oriented 2-disk which intersects $J$ transversely at one point with oriented intersection number equal to one.

If $K$ is an oriented  simple closed curve in the interior of $\Si\times I$ and disjoint from $J$, then define $\lk(J,K)$ to be the unique integer $m$ such that $[K] = m [\mu]$ in  $H_1(\Si \times I \sm J, \Si \times \{1\})$.
Alternatively, if $B$ is a 2-chain in $\Si \times I$ such that $\partial B = K -v$, where $v$ is a 1-cycle in $\Si \times \{1\}$,
then $\lk(J,K) = J \cdot B$, where $\cdot$ denotes the intersection number.

One can determine the asymmetric linking numbers easily using the following simple diagrammatic description.
If $J,K$ are two oriented disjoint simple closed curves
in $\Si\times I$, and $J\cup K$ is represented as a diagram on $\Si$, then $\lk(J,K)$ is equal to the number of times $J$ goes above $K$ with sign given by comparing with orientation of $\Si$, where ``above'' refers to the positive $I$ direction in $\Si\times I$. 

For example, the linking numbers for the link in \Cref{figure-2} are given by $\lk(J,K)=0$, $\lk(K,J)=-1$, $\lk(J,L)=-1$, $\lk(L,J)=0$, $\lk(K,L)=0$, and $\lk(L,K)=1.$

\begin{figure}[ht]
\centering
\includegraphics[scale=0.80]{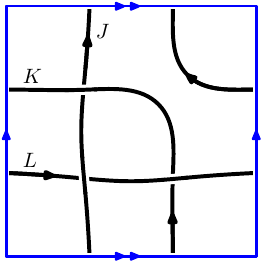} 
\vspace{-2mm}
\caption{\small  An alternating link in the thickened torus.}
\label{figure-2}
\end{figure}

Now suppose that $F$ is a compact, connected, unoriented surface embedded in $\Si\times I$. Its normal bundle $N(F)$ has boundary a $\{\pm 1\}$-bundle $\widetilde{F}\stackrel{\pi}{\lto}F$, a double cover with $\widetilde{F}$ oriented. (If $F$ is oriented, then $\widetilde{F}$ is the trivial double cover.) 
Define the transfer map $\tau \co H_1(F) \to H_1(\widetilde{F})$ by setting $\tau([a]) = [\pi^{-1}(a)].$

The Gordon-Litherland pairing is extended to thickened surfaces $\Si \times I$ in \cite{Boden-Chrisman-Karimi-2021},  and we review its  definition. 
Let $F \subset \Si \times I$ be a compact, unoriented surface without closed components. For $a,b\in H_{1}(F)$, let  $\cG_F(a,b)=\tfrac{1}{2}\big(\lk(\tau a,b)+\lk(\tau b,a)\big).$ 
(This is a slightly different formulation from that in \cite{Boden-Chrisman-Karimi-2021}, but the two are equivalent.)

To see the pairing is well-defined, we make two elementary observations. First, if two oriented curves $K,K'$ are homologous in $\Si \times I \sm J$, then $\lk(J,K) = \lk(J,K')$. Second, if $a,a',b \in H_1(F)$ and if $a$ and $a'$ are homologous on $F$, then $\tau a$ and $\tau a'$ are homologous in $\Si \times I \sm b$. 

Thus, the pairing is well-defined. It is clearly symmetric by definition, but it is not entirely clear that it is integral. To see that, consider the local contributions to $\cG_F(a,b)$ anywhere $a$ and $b$ cross or intersect. If $a$ crosses over $b$, then the local contribution to $\lk(\tau b,a)$ is $0$, and the local contribution to $\lk(\tau a,b)$, is $2\ep$, where $\ep=\pm 1$ is determined by the orientations of $a$ and $b$. Thus the contribution of the crossing to $\cG_F(a,b)$ is  $\ep = \pm 1$. (If $F$ is a checkerboard surface and $a,b$ are simple closed curves on $F$, then they can cross only along a twisted band as they pass through a crossing of the diagram (see \Cref{figure-local} (left)).) By symmetry, the same is true if $b$ crosses over $a$. Now suppose that $a$ and $b$ intersect at a point (see \Cref{figure-local} (right)). One can check that the local contributions of that point to $\lk(\tau a,b)$ and $\lk(\tau b,a)$ cancel, so that the points where $a$ and $b$ intersect do not contribute to $\cG_F(a,b)\in \ZZ$. Therefore,
$$\cG_{F}\colon H_{1}(F)\times H_{1}(F)\to\ZZ,$$
gives a well-defined symmetric bilinear pairing.

\begin{figure}[ht]
\centering
\includegraphics[scale=0.90]{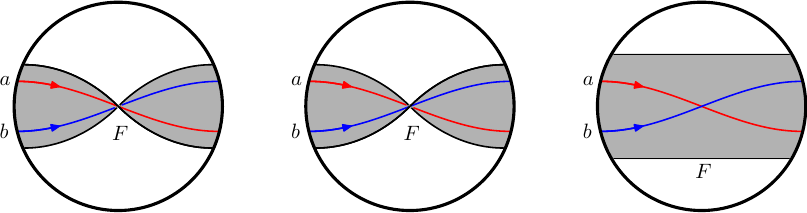} 
\vspace{-2mm}
\caption{\small   
On the left, two curves $a,b$ cross a right-handed half-twist in $F$ with $a$ crossing over $b$, and the local contribution to $\cG_F(a,b)$ is 1. 
In the middle,  $a,b$ cross a left-handed half-twist in $F$ with $b$ crossing over $a$, and the local contribution to $\cG_F(a,b)$ is $-1.$
On the right, $a,b$ intersect on $F$, and the local contribution to $\cG_F(a,b)$ is 0.}
\label{figure-local}
\end{figure}

For $x \in H_1(F),$ let $\lb x^{}_{} \rb_F = \cG_{F}(x,x)$. Clearly $\lb -x \rb_F = \lb x^{}_{} \rb_F$. The number $\frac{1}{2}\lb x^{}_{} \rb_F$ is called the \textit{framing} of $x$ in $F$.

\begin{figure}[ht]
\centering
\includegraphics[scale=0.90]{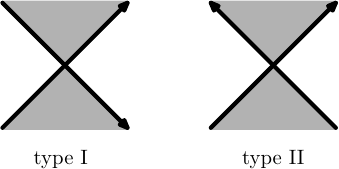} \qquad \qquad
\includegraphics[scale=0.90]{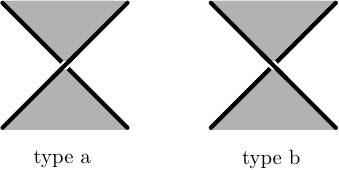} 
\vspace{-2mm}
\caption{\small  Type I and II crossings (left), and incidence numbers (right).}\label{figure-3}
\end{figure}

Assume now that $L \subset \Si \times I$ is a link with $m$ components and write $L = K_1 \cup \cdots \cup K_m$.
Suppose further that $F \subset \Si \times I$ is a spanning surface for $L$.
Each component represents an element $[K_i] \in H_1(F)$, well-defined up to sign, and $\frac{1}{2}\lb [K_i]\rb_F$ is equal to the framing that the surface $F$ induces on $K_i$. Set $e(F) = -\frac{1}{2} \sum_{i=1}^m \lb [K_i] \rb_F$, the Euler number of $F$.
It is equal to the self-intersection number of $F$, which is defined as a sum of signs $\ep_p$ over points $p \in F \cap F'$, where $F'$ is a transverse pushoff $F'$ of $F$ in $\Si \times I \times I$. The sign $\ep_p$ is computed by choosing a local orientation of $F$ at $p$ and using it to induce a local orientation on $F'$ at $p$. Then $\ep_p=\pm 1$, and it is determined by comparing the orientations of $F$ and $F'$ at $p$ with the given orientation on $\Si \times I \times I$. The sign $\ep_p$ is independent of the choice of local orientation of $F$.

Further, set   $e(F, L) = -\frac{1}{2} \lb [L] \rb_F.$ 
If $L'= K_1' \cup \cdots \cup K_m'$ denotes the push-off of $L$ that misses $F$, then it follows that 
\begin{eqnarray*}
e(F) &=& -\sum_{i=1}^m \lk(K_i, K_i'),\\
e(F,L) &=&  -\lk(L,L')= -\sum_{i,j=1}^m \lk(K_i, K_j').
\end{eqnarray*}

Note that $e(F)$ is independent of the choice of orientation on $L$, whereas $e(F,L)$ is not.  
To see this, notice that in the formula for $e(F)$, each component $K_i$ can be oriented arbitrarily provided its push-off $K_i'$ is oriented in a compatible manner.  
The two quantities are related by the formula
\begin{equation} \label{eqn:Euler}
e(F,L) = e(F) - \la(L),
\end{equation}
where $\la(L) = \sum_{i \neq j} \lk(K_i, K_j)$ denotes the total linking number of $L$.\footnote{In \cite{Greene}, the total linking is defined as $\lk(L) = \sum_{i < j} \lk(K_i, K_j)$.
% (cf. \cite{Murasugi-1970, GL-1978}). 
Our formula is different since $\lk(K_i,K_j) \neq \lk(K_j, K_i)$ in general for links in $\Si \times I$. If $L$ is classical, then $\la(L) = 2\lk(L)$. (Note that there is a missing factor of 2 in the formula for the Euler numbers in \cite[p.2137]{Greene}.)}

Two spanning surfaces for a given link are said to be \textit{$S^*$-equivalent} if one can be obtained from the other by ambient isotopy, attachment or removal of tubes, and attachment or removal of a small half-twisted band, as depicted in \Cref{figure-4}.

\begin{figure}[ht]
\centering
\includegraphics[scale=1.40]{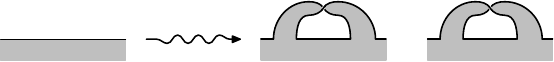} 
\vspace{-1mm}
\caption{\small  Attaching a small half-twisted band.}\label{figure-4}
\end{figure}

The signature of the pair $(L,F)$ is defined by setting $\si_{F}(L)=\sig(\cG_{F})+\frac{1}{2}e(F,L)$. 
The following lemma shows that  $\si_{F}(L)$ gives a well-defined invariant of the pair $(F,L)$ depending only on the $S^*$-equivalence class of $F$. For a proof, see \cite[Lemma 2.3]{Boden-Chrisman-Karimi-2021}.

\begin{lemma} \label{GL-formula}
If $F$ and $F'$ are $S^{*}$-equivalent spanning surfaces for a link $L$, then $\si_{F}(L)=\si_{F'}(L)$.
\end{lemma}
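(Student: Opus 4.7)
The plan is to verify invariance of $\sigma_F(L)=\sig(\cG_F)+\tfrac12 e(F,L)$ separately under each of the three generating $S^*$-moves: (i) ambient isotopy, (ii) attachment/removal of a tube, and (iii) attachment/removal of a small half-twisted band. Since $S^*$-equivalence is generated by finite sequences of these moves, this suffices.

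For move (i), both the asymmetric linking numbers defining $\cG_F$ and the framing pushoff defining $e(F,L)$ are invariant under ambient isotopy in $\Si\times I$, so there is nothing to check beyond unpacking definitions.

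For move (ii), suppose $F'=F\cup T$ where $T$ is a small 1-handle (tube) attached away from $L$. Then $H_1(F')\cong H_1(F)\oplus\ZZ\alpha\oplus\ZZ\beta$, where $\alpha$ is a small meridian of the tube and $\beta$ runs once through it. The curve $\alpha$ bounds a disk transverse to $F'$ whose interior is disjoint from $F$, which forces $\cG_{F'}(\alpha,\alpha)=0$ and $\cG_{F'}(\alpha,x)=0$ for every $x\in H_1(F)$, while $\cG_{F'}(\alpha,\beta)=\pm1$. The new $2\times 2$ block is therefore hyperbolic, so $\sig(\cG_{F'})=\sig(\cG_F)$. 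Since $T$ is disjoint from $L$, the normal pushoff $L'$ is unchanged, so $e(F',L)=e(F,L)$. Combining, $\sigma_{F'}(L)=\sigma_F(L)$.

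For move (iii), let $F'$ be obtained from $F$ by attaching a small half-twisted band of some sign $\ep\in\{\pm1\}$. The band introduces a single new cycle $\gamma\in H_1(F')$ with $\cG_{F'}(\gamma,\gamma)=\ep$, and by choosing the band locally one verifies $\gamma$ is $\cG_{F'}$-orthogonal to $H_1(F)\subset H_1(F')$; hence $\sig(\cG_{F'})=\sig(\cG_F)+\ep$. On the other hand, the half-twist changes the pushoff $L'$ near the band by a full twist relative to $F$, so that $\lb [L]\rb_{F'}=\lb[L]\rb_F+4\ep$ (equivalently $e(F',L)=e(F,L)-2\ep$) by a direct local computation using the diagrammatic rule for $\lk$. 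Then
\[
\sigma_{F'}(L)=(\sig(\cG_F)+\ep)+\tfrac12\bigl(e(F,L)-2\ep\bigr)=\sigma_F(L),
\]
as desired.

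The main obstacle is the local calculation in (iii): one must verify that the signature change contributed by the new self-pairing $\ep$ is exactly cancelled by the framing change in $e(F,L)$ coming from the half-twist. This is a purely local model in $D^2\times I$, handled by choosing a standard picture of the half-twisted band attached along two nearby arcs, computing $\cG_{F'}(\gamma,\gamma)$ via \Cref{figure-local}, and then comparing $L'$ with the boundary of a small pushoff of $F'$ to get the framing shift. All the other calculations in (ii) and in the orthogonality step of (iii) are likewise done in a neighborhood of the newly-attached piece, using the diagrammatic description of asymmetric linking given in Section 2.
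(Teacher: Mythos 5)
The paper does not give its own proof of this lemma; it simply cites \cite[Lemma 2.3]{Boden-Chrisman-Karimi-2021}. Your argument, which verifies invariance of $\si_F(L)$ under each of the three generating $S^*$-moves (ambient isotopy, tube attachment/removal, and half-twisted band attachment/removal), is the standard Gordon--Litherland strategy, is correct, and matches the approach of the cited reference.
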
 

For classical links, every link admits a spanning surface, and any two are $S^*$-equivalent (for a proof, see \cite[Theorem 11]{GL-1978} and \cite{Yasuhara-2014}). 
For links in thickened surfaces, the situation is more complicated.

In general, a link $L \subset \Si \times I$ admits  a spanning surface if and only if it is checkerboard colorable. If $g(\Si)>0$, not all links in $L \subset \Si \times I$ are checkerboard colorable (see \Cref{figure-5}).
Furthermore, not all spanning surfaces for a link $L \subset \Si \times I$ will be $S^*$-equivalent. In fact,
if $L$ is non-split, then two spanning surfaces $F$ and $F'$ for $L$ are $S^*$-equivalent 
if and only if $[F] =[F']$ as elements in $H_2(\Si \times I,L;\ZZ/2)$ (see \cite[Lemma 1.5 \& Proposition 1.6]{Boden-Chrisman-Karimi-2021}). 

Thus, for a non-split checkerboard colorable link $L$ in any thickened surface, there are two $S^*$-equivalence classes of spanning surfaces. Indeed, the black and white surfaces represent the two $S^*$-equivalence classes, and any other spanning surface for $L$ is $S^*$-equivalent to either the black or the white surface.

\begin{figure}[ht]
\centering
\includegraphics[scale=0.90]{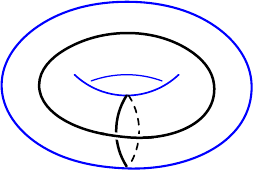} 
\vspace{-1mm}
\caption{\small  A non-checkerboard colorable link.}\label{figure-5}
\end{figure}

More generally, given a spanning surface $F$ for a link $L \subset \Si \times I$ in a connected thickened surface, we can construct a new spanning surface  by connecting $F$ to a parallel copy of $\Si$ near $\Si \times \{0\}$ by a small thin tube $\tau$. Let $F' = F \#_{\tau} \Si$ denote the new spanning surface. When $L$ is non-split, it is not difficult to see that $F$ and $F'$ represent the two $S^*$-equivalence classes of spanning surfaces.

In the case that $F$ is a checkerboard surface for $L$, the correction term $\frac{1}{2}e(F,L)$ is given by a sum of incidence numbers of crossings of type I or II (see \Cref{figure-3}). The incidence number of a crossing $x$ is denoted $\eta_x$ and is defined by setting 
$$\eta_x=\begin{cases} 1 & \text{if $x$ is type $a$,}  \\
-1 & \text{if $x$ is  type $b$.} 
\end{cases}$$

Specifically, if $B$ and $W$ denote the black and white surfaces of the checkerboard coloring, set
$$ \mu_W(D) = \sum_{x \; \text{type I}} -\eta_x\qquad \text{ and } \qquad \mu_B(D)
=\sum_{x \; \text{type II}} \eta_x.$$
By Lemma 2.4 \cite{Boden-Chrisman-Karimi-2021}, we  see that
$\mu_W(D)=-\frac{1}{2}e(W,L)$ and $\mu_B(D) = -\frac{1}{2}e(B,L).$

%%%%%%%%%%%%%%%%%%%%%%%%%%%%%%%%%%%%%%%%%%%%%%%%%%%%%%%%%%%%%%%%%%%%%%%%%%%%%%%%%%%%%%%%%%%%%%%%%%%%%%%%%
\setcounter{section}{3} \noindent
\subsection{Definite surfaces} \label{S3} 
In this section, we show that a connected checkerboard colorable link diagram $D$ on $\Si$ is alternating 
if and only if its checkerboard surfaces are definite and of opposite sign. 

\begin{definition}
A compact, connected surface $F$ in $\Si\times I$ is positive (or negative) definite if its Gordon-Litherland pairing $\cG_{F}$ is.
\end{definition}

Suppose $D$ is a connected link diagram on $\Si$ such that 
\begin{itemize}
\item[{(i)}] $\Si\sm D$ is  a union of disks, 
\item[{(ii)}] $D$ is checkerboard colorable. 
\end{itemize}
Choose a checkerboard coloring of $D$ and let $B,W$ be the black and white surfaces.
Then $B,W$ have first Betti numbers
\begin{equation} \label{eqn:firstbetti}
\begin{split} 
b_{1}(W) &= 2g+\be-1, \\
b_{1}(B) &= 2g+\al-1, 
\end{split}
\end{equation}
where $g=\text{genus}(\Si)$ is the genus of $\Si$,
$\al$ is the number of white disks and $\be$ is the number of black disks. 

The Euler characteristic of $\Si$ satisfies $\chi(\Si)=2-2g=c(D)-2c(D)+(\al+\be),$ 
where $c(D)$ denotes the number of crossings of $D$. Thus we have 
\begin{equation}\label{eqn:star}
\al+\be=2-2g+c(D).
\end{equation} 
In particular, combining equations \eqref{eqn:firstbetti} and \eqref{eqn:star} gives that
\begin{equation} \label{eqn:bettisum}
b_1(W)+b_1(B) = 2g +c(D).
\end{equation}

\begin{remark} \label{kamada}
According to \cite[Lemma 7]{Kamada-02}, any link diagram $D$ on a surface which is alternating and cellularly embedded is checkerboard colorable.  
\end{remark}

\begin{lemma}\label{BWdefinite}
If $D$ is a link diagram on $\Si$ which is cellularly embedded and alternating, then the black and white surfaces are definite and of opposite sign.
\end{lemma}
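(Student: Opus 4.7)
The plan is to compute the Gordon-Litherland pairings $\mathcal{G}_W$ and $\mathcal{G}_B$ in combinatorially natural bases for $H_1(W)$ and $H_1(B)$, and exhibit each as a sum of rank-one semi-definite forms whose common sign is dictated by the incidence numbers $\eta_c$. Since for an alternating diagram all $\eta_c$ share a common sign with respect to $W$ and the opposite common sign with respect to $B$, this would immediately give semi-definiteness of opposite signs, and the cellular embedding hypothesis would rule out any kernel.

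First, I would set up the bases. Since $D$ is cellularly embedded, $W$ deformation retracts onto a ribbon graph $G_W \subset \Sigma$ with $\alpha$ vertices (white disks collapsed to points) and $c(D)$ edges (one per half-twisted band at a crossing). From the long exact sequence of the pair $(\Sigma, G_W)$ combined with \eqref{eqn:firstbetti}, a basis of $H_1(W) = H_1(G_W)$ consists of $\beta - 1$ face-boundary cycles $\gamma_1, \dots, \gamma_{\beta-1}$ around $\beta - 1$ of the $\beta$ black disks, together with $2g$ cycles $\delta_1, \dots, \delta_{2g}$ lifting a basis of $H_1(\Sigma)$; an analogous basis is chosen for $H_1(B)$. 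Next, applying the local integrality description of $\mathcal{G}_F$ from \Cref{S2}, where crossings of two cycles through a half-twist contribute $\eta_c$ while on-surface intersections contribute $0$, the Gram matrix of $\mathcal{G}_W$ decomposes as $\sum_c \eta_c V_c V_c^{\tr}$, with $V_c$ the column vector recording the signed multiplicity with which each basis cycle traverses the half-twist at the crossing $c$. Because $D$ is alternating, all $\eta_c$ share a common sign, so $\mathcal{G}_W$ is a sign-consistent sum of rank-one positive semi-definite forms, hence semi-definite of that common sign. The identical argument applied to $B$, using that the chirality of the half-twist of $B$ at each crossing is opposite to that of $W$, yields semi-definiteness of the opposite sign for $\mathcal{G}_B$.

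Finally, to upgrade semi-definiteness to definiteness I would identify the kernel: a null class $x$ must satisfy $V_c(x) = 0$ at every crossing, so $x$ is represented by a cycle disjoint from every half-twisted band, hence contained in the union of the white (resp.\ black) disks of $D$. Since $D$ is cellularly embedded, these disks are disjoint and simply connected, so any such cycle is null-homologous in $W$ (resp.\ $B$). The main technical obstacle I anticipate is verifying the decomposition $\mathcal{G}_W = \sum_c \eta_c V_c V_c^{\tr}$ when the basis includes the genus cycles $\delta_k$: for the face-boundary cycles $\gamma_i$ this reduces to the classical Goeritz calculation and is essentially routine, but the $\delta_k$ must be carefully represented as cycles in $G_W$ so that the signed traversal counts $V_c(\delta_k)$ are unambiguous, and one must check that the mixed contributions $\mathcal{G}_W(\gamma_i, \delta_k)$ and the pure genus contributions $\mathcal{G}_W(\delta_k, \delta_l)$ assemble correctly into the claimed rank-one pattern.
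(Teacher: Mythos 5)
Your argument follows essentially the same route as the paper's: both pass to the Tait graph (your $G_W$, the paper's $\Ga_W$), read the Gordon--Litherland pairing off edge-by-edge from the incidence labels $\eta_c$, and deduce definiteness from the constant sign of $\eta_c$ in the alternating case. The ``main technical obstacle'' you flag at the end is not actually one. The Goeritz decomposition $\cG_W = \sum_c \eta_c V_c V_c^{\tr}$ is a statement about $1$-cycles in the graph $G_W$, and once one identifies $H_1(W) \cong Z_1(G_W) \subset \ZZ^{E(G_W)}$, the coefficient $V_c(x)$ at the edge through crossing $c$ is canonically defined for every class $x$, with no distinction between the face cycles $\gamma_i$ and the genus cycles $\delta_k$. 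The decomposition then follows basis-independently from the local contribution analysis in \Cref{S2}: each pair of strands of the two curves passing through the half-twisted band at $c$ contributes $\eta_c$ to the pairing, while on-disk intersection points contribute $0$, giving $\cG_W(a,b) = \sum_c \eta_c V_c(a) V_c(b)$ for arbitrary $a,b \in H_1(W)$. Your kernel identification (a null class has $V_c = 0$ for all $c$, hence is the zero $1$-cycle) then closes the argument cleanly, and in fact fills in a small step the paper leaves implicit when it passes from $\cG_B(\ga,\ga) = -k$ for a single cycle to negative definiteness of the whole form.
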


\begin{proof}
%We will give a direct argument, but one can also deduce this using an indirect argument by combining Theorem 5.19 of \cite{Karimi} with Theorem 4.1 of \cite{Boden-Chrisman-Karimi-2021}.
By \Cref{kamada}, we see that $D \subset \Si$ is checkerboard colorable, and we choose the coloring so that every crossing has type $b$. The black surface $B$ is a union of  disks and half-twisted bands, and with this choice each band has a \emph{left-handed} half-twist. (The white surface $W$ is likewise a union of disks and half-twisted bands, and each band has a right-handed half-twist.) 

Let $\Ga_{B}$ be the Tait graph for the black surface; it is a graph in $\Si$ with one vertex for each black disk and one edge for each crossing. Choose arbitrarily an orientation for the edges, and label the edge associated with a crossing $x$ of $D$ with its incidence number $\eta_x$. (Since every crossing has type $b$, each edge of $\Ga_B$ is labeled $-1$.) The  black surface $B$ admits a deformation retraction onto $\Ga_{B}$, hence $H_1(B)\cong H_1(\Ga_{B})$. 

%Observe that, any arc $a$ passing through one of the bands will have local self-linking equal to $\eta=-1$. This is independent of the direction in which the arc crosses the band. Thus, the local contribution of the band crossing to $\cG_B(a,a)$ is $-1.$

%Similarly, if two parallel arcs $a$ and $b$ cross a band in the same direction, then the local contribution to $\cG_B(a,b)$ is $-1$ (see \Cref{figure-local} (middle)). Note that this is again independent of the direction in which the arcs $a,b$ traverse the band, as long as they both cross the band in the same direction.

Now consider an element $\ga$ in $H_1(B) \cong H_1(\Ga_B),$ which we can view as a sum $\ga_1 + \cdots  + \ga_k$ of cycles in $\Ga_B$. We can write each cycle $\ga_i$ as a sum of oriented edges. If the same edge occurs with opposite signs in two of the cycles, then they can be eliminated without altering the homology class of $\ga$.
Thus, we can assume that no edge occurs with opposite signs in two cycles.

Let $a_i$ be a simple closed curve in $B$ corresponding to $\ga_i$ for $i=1,\ldots, k$. We can assume that the curves $a_i$ and $a_j$ intersect transversely for $i \neq j$, and that their intersection points lie within the black regions. This implies that $a_i \cap a_j$ consists of finitely many points, none of which contribute to  $G_F(a_i,a_j)$ (see \Cref{figure-local} (right)). Since no edge occurs with opposite signs in the cycles $\ga_i$ and $\ga_j$, whenever two curves $a_i$ and $a_j$ cross the same band, they cross it in the same direction.

Since $D$ is cellularly embedded, any nontrivial cycle $\ga_i$ will have edge set with $\ell_i>0$ edges. Therefore, $\lb \ga_i \rb =\cG_B(\ga_i,\ga_i)=-\ell_i<0$. This step follows by computing it as a sum of local contributions, 
each of which comes from a band crossing of $a_i$ and is negative. For $i \neq j$, we see that $\cG_B(\ga_i,\ga_j) \leq 0$. This follows by again viewing it as a sum of local contributions, each of which comes from a band crossing of $a_i$ and $a_j$ and is non-positive. (This step uses the condition that whenever the two curves $a_i$ and $a_j$ cross the same band, they cross it in the same direction.)
Thus, for $\ga = \ga_1+\cdots + \ga_k,$ we see that
\begin{equation*}
\begin{split}
\lb \ga \rb_B = \cG_B(\ga, \ga) &= 
\cG_B(\ga_1 +\cdots+\ga_k, \ga_1 +\cdots+\ga_k), \\
& = \sum_{i=1}^k \cG_B(\ga_i,\ga_i) +2 \sum_{i<j}\cG_B(\ga_i,\ga_j) <0.
\end{split}
\end{equation*}
Since this holds for all nontrivial homology classes in $H_1(B)$, it follows that $B$ is negative definite.

A similar argument shows that the white surface $W$ is positive definite.
\end{proof}

\begin{remark}\label{nonsingular}
If $F$ is a spanning surface for $L$ such that $\cG_F$ non-singular, then the Gordon-Litherland pairing will be non-singular for any spanning surface $S^*$-equivalent to $F$, see  
\cite[Theorem 2.5]{Boden-Chrisman-Karimi-2021}.
\end{remark}

The next result is a restatement of \cite[Theorem 3.1]{Boden-Chrisman-Karimi-2021}.

\begin{theorem}\label{minimal-genus}
Suppose $L \subset \Si \times I$ is a link with a connected, checkerboard colored diagram $D \subset \Si$. Let $B,W$ be the two spanning surfaces associated to the black and white regions, respectively.
If the Gordon-Litherland pairings $\cG_B$ and $\cG_W$ are both non-singular, then $L \subset \Si \times I$ is non-split and has minimal genus. In particular, this implies that $D$ is cellularly embedded.
\end{theorem}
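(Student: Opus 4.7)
The plan is to prove the contrapositive: if $D$ is not cellularly embedded, then at least one of $\cG_{B},\cG_{W}$ is singular. This yields the theorem, since for non-split $L\subset\Si\times I$ cellular embedding of a diagram is equivalent to $L$ being of minimal genus (as discussed in the introduction: the forward direction is the displayed remark there, and the reverse follows because if every complementary region is a disk, then any simple closed curve disjoint from $D$ is nullhomotopic in $\Si$, ruling out destabilization). So suppose $\Si\sm D$ has a complementary region $R$ that is not a disk; without loss of generality assume $R$ is black, so $R\subset B$.

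Since $R$ is a connected surface with nonempty boundary and $R\neq D^{2}$, we have $b_{1}(R)\geq 1$, and one can choose a simple closed curve $\ga\subset R$ which is non-bounding in $R$, i.e.\ $[\ga]\neq 0$ in $H_{1}(R)$. I claim that $[\ga]$ represents a nonzero element of the radical of $\cG_{B}$, contradicting the hypothesis. To see that $[\ga]\neq 0$ in $H_{1}(B)$, note that $B$ deformation retracts onto a generalized Tait graph $\widetilde{\Ga}_{B}$ obtained from the usual Tait graph $\Ga_{B}$ (one vertex per black region, one edge per crossing) by attaching, at the vertex of each black region $R'$, a wedge of $b_{1}(R')$ circles representing a basis of $H_{1}(R')$. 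This retraction gives the direct sum decomposition
\[
H_{1}(B)\;\cong\; H_{1}(\Ga_{B})\,\oplus\,\bigoplus_{R'\text{ black}}H_{1}(R'),
\]
so $H_{1}(R)\hookrightarrow H_{1}(B)$ is injective and $[\ga]\neq 0$ in $H_{1}(B)$.

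For the radical property, fix an arbitrary $\al\in H_{1}(B)$ represented by a $1$-cycle on $B$ transverse to $\ga$. By the local-contribution analysis of Section~2, $\cG_{B}(\ga,\al)$ is a sum over (i) crossings of $D$ at which both $\ga$ and $\al$ traverse the same half-twisted band of $B$, each contributing $\pm 1$, and (ii) transverse intersections of the two cycles in the smooth (band-free) part of $B$, each contributing $0$. Since $\ga\subset R$ meets no half-twisted band, the type-(i) terms are vacuous and the type-(ii) terms all vanish, giving $\cG_{B}(\ga,\al)=0$. Therefore $\cG_{B}$ has nontrivial radical and is singular, completing the contrapositive. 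The main subtlety is the homological non-triviality step, because an essential curve in $R$ may be null-homologous in $R$ (for instance a separating curve on a higher-genus subsurface); this is handled by the stronger choice of $\ga$ as non-bounding in $R$, combined with the direct-summand structure of $H_{1}(R)\subset H_{1}(B)$ afforded by the generalized Tait graph retraction.
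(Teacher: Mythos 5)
Your argument for the key step --- that a non-cellularly embedded diagram forces one of the checkerboard pairings to be singular --- is correct, but takes a genuinely different route from the paper's. The paper produces a simple closed curve $\al$ in a black region whose class is nontrivial in $H_1(\Si)$ (via a separating/non-separating case split on a non-contractible curve disjoint from $D$), then uses the map $H_1(B)\to H_1(\Si)$ to conclude $[\al]\neq 0$ in $H_1(B)$. You instead take a curve $\ga$ non-bounding in the non-disk region $R$ and argue $[\ga]\neq 0$ in $H_1(B)$ via the generalized Tait graph decomposition $H_1(B)\cong H_1(\Ga_B)\oplus\bigoplus_{R'}H_1(R')$. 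This is more self-contained and can also be seen directly from Mayer--Vietoris: write $B$ as the union $A\cup C$ with $A=\bigsqcup R'$ the black regions and $C$ the twisted bands; then $A\cap C$ is a union of arcs, so $H_1(A\cap C)=0$ and $\bigoplus_{R'}H_1(R')=H_1(A)$ injects into $H_1(B)$. The rest of your key step (local contributions vanish for a curve avoiding all half-twisted bands) matches the paper. Both routes are valid.

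The passage from cellular embedding to minimal genus is, however, a genuine gap. You assert that cellular embedding of a diagram is \emph{equivalent} to $L$ having minimal genus, attributing this to the introduction, but the introduction states only the direction minimal genus $\Rightarrow$ cellularly embedded. Your justification for the converse --- that cellular embedding ``rules out destabilization'' --- only rules out destabilizing the given diagram $D$. It does not preclude an isotopy of $L$ in $\Si\times I$ (equivalently, a sequence of Reidemeister moves taking $D$ to a different diagram $D''$) after which a non-contractible curve disjoint from $D''$ appears and a destabilization becomes available. Cellular embedding of a single diagram is \emph{not} equivalent to minimal genus; the conclusion genuinely requires the non-singularity hypothesis. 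The paper is explicit that its displayed argument is only a sketch and refers to \cite[Theorem~3.1]{Boden-Chrisman-Karimi-2021} for the full argument. The missing ingredient, which you should supply, is that non-singularity of $\cG_B$ and $\cG_W$ is a property of the $S^*$-equivalence classes of the checkerboard surfaces (\Cref{nonsingular}), and Reidemeister moves preserve those $S^*$-equivalence classes. Hence, if $\cG_B,\cG_W$ are non-singular, then \emph{every} Reidemeister-equivalent diagram of $L$ is cellularly embedded, and only then does Kuperberg's uniqueness theorem give that $\Si$ has minimal genus.
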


\begin{proof}
The proof is by contradiction, and here we sketch the main idea and refer
to the proof of \cite[Theorem 3.1]{Boden-Chrisman-Karimi-2021} for further details.

The key step is to show that if $D \subset \Si$ is not cellularly embedded, then one of $\cG_B$ or $\cG_W$ is singular. 

Assume then that $D$ is not cellularly embedded. Then we can find  a non-contractible simple closed curve $\ga$  disjoint from $D$. Since $\ga$ and $D$ are disjoint, $\ga$ must be contained entirely in either one of the black regions or one of the white regions.   
Without loss of generality, we can assume that $\ga$ lies in a black region.

We claim there exists a simple closed curve $\al$ in $\Si$ lying entirely in a black region such that its homology class $[\al] \in H_1(\Si)$ is nontrivial. Indeed, if $\ga$ is non-separating, then we can take $\al =\ga.$ Otherwise, if $\ga$ is separating, then since $D$ is connected, it must lie in one of the connected components of $\Si \sm \ga$. Both components have positive genus (since $\ga$ is non-contractible), and the component disjoint from $D$ is contained entirely in a black region. There exists  a simple closed curve $\al$ in that component with $[\al]\neq 0$ in $H_1(\Si)$.
 
Consider now the map $H_1(B) \to H_1(\Si)$ induced by  
$B \hookrightarrow \Si \times I \to \Si$, the composition of inclusion and projection.
Since $[\al]$ is nontrivial in $H_1(\Si)$, it must also be nontrivial in $H_1(B).$
Further, since $\al$ is a simple closed curve, the set 
$\{[\al]\}$ can be extended to a basis $U$ for $H_1(B)$. 
Since $\al$ lies entirely within one of the black regions, we have $\cG_{B}(\al, \al)=\lb \al^{}_{} \rb_B=0$. Any other element of $U$ can be represented as a simple closed curve $\be$ on $B$. Since $\al$ lies entirely in a black region, the two curves $\al$ and $\be$ have only intersection points; there are no points where $\al$ crosses over or under $\be$. However, as we have seen, an intersection point contributes 0 to the pairing, and thus it follows that  $\cG_{B}(\al, \be)=0$. This holds for all $\be \in U$, therefore the Gordon-Litherland pairing $\cG_{B}$ is singular.

We now prove that $L$ is non-split. Suppose to the contrary that $L$ is split, and let $D' \subset \Si$ be a disconnected diagram for $L$. Notice that $D'$ is not cellularly embedded, and that the checkerboard surfaces need not be connected. By adding small tubes, we can connect them. However, one or both of the resulting spanning surfaces will have singular Gordon-Litherland pairing. This is a contradiction (cf. \Cref{nonsingular}).
\end{proof}

\begin{corollary} \label{cor-min-g}
Any link $L$ in $\Si \times I$ represented by a cellularly embedded alternating diagram is non-split and has minimal genus.
\end{corollary}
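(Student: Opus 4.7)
The plan is to deduce this corollary by combining \Cref{BWdefinite} with \Cref{minimal-genus}, since these two results have already done the hard work. The argument is essentially bookkeeping: show that the hypotheses of \Cref{minimal-genus} are satisfied whenever $D$ is cellularly embedded and alternating.

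First, I would choose a checkerboard coloring of $\Sigma \sm D$, which exists because every alternating diagram is checkerboard colorable (orient the coloring so that, say, the black region sits to the left of each over-arc in the standard way). Let $B$ and $W$ denote the black and white checkerboard surfaces, so both are spanning surfaces for $L$. Because $D$ is cellularly embedded and alternating on $\Sigma$, \Cref{BWdefinite} applies and tells us that $\cG_B$ is negative definite while $\cG_W$ is positive definite (or vice versa, depending on the chosen coloring).

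Next, I would observe that any definite symmetric bilinear form over $\ZZ$ is in particular non-singular: a vector $x$ in the radical would satisfy $\cG_F(x,x)=0$, contradicting definiteness unless $x=0$. Consequently both $\cG_B$ and $\cG_W$ are non-singular pairings on $H_1(B)$ and $H_1(W)$ respectively. At this point \Cref{minimal-genus} directly applies to the non-split link $L \subset \Sigma \times I$ with its checkerboard colored diagram $D$, and concludes that $L$ has minimal genus.

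There is really no obstacle to overcome here beyond checking that the two hypotheses of \Cref{minimal-genus}, namely non-splitness and non-singularity of both $\cG_B$ and $\cG_W$, are available in our setting; non-splitness is assumed, and non-singularity is handed to us for free by \Cref{BWdefinite} together with the elementary fact that definiteness implies non-singularity. Optionally, I would conclude by noting (via \Cref{nonsingular}) that the minimal genus conclusion is independent of which representative in the $S^*$-equivalence class of $B$ or $W$ one starts with, which matches the intrinsic nature of the minimal genus invariant.
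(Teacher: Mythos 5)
Your argument is correct and follows essentially the same route as the paper's proof: apply \Cref{BWdefinite} to get definite checkerboard surfaces, note that definiteness implies non-singularity, and then invoke \Cref{minimal-genus}. The only difference is that you spell out the elementary step ``definite $\Rightarrow$ non-singular'' explicitly, which the paper compresses into a single phrase.
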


\begin{proof}
Let $D$ be a cellularly embedded alternating diagram for $L$. Then $D$ is checkerboard colorable (\Cref{kamada}), and \Cref{BWdefinite} implies the black and white surfaces are definite. In particular, their Gordon-Litherland pairings are non-singular. The conclusion now follows from \Cref{minimal-genus}. \end{proof}
%%%%%%%%%%%%%%%%%

By convention, given an alternating diagram for a link $L$ in $\Si \times I$, we will choose the coloring in which every crossing has type $b$. With this choice, the white surface becomes positive definite and the black surface becomes negative definite.

\begin{lemma}
Suppose $D$ is a connected alternating diagram for a link $L$ in $\Si\times I$ with checkerboard coloring such that every crossing has type $b$. Then 
$$\si_{W}(L)-\si_{B}(L)=2g(\Si).$$ 
\end{lemma}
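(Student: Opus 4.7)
The plan is to decompose both signatures via the definition $\sigma_F(L) = \sig(\cG_F) + \tfrac12 e(F,L)$ and to evaluate the ``signature of the pairing'' part and the ``Euler number'' correction part separately. The signature part is controlled by the definiteness conclusion of \Cref{BWdefinite}, while the Euler part is computed from the incidence-number formulas $\mu_W(D) = -\tfrac12 e(W,L)$ and $\mu_B(D) = -\tfrac12 e(B,L)$ recalled at the end of \S2. Each piece should be linear in the crossing number $c(D)$, with opposite signs, so that the $c(D)$ contributions cancel and the only surviving quantity is $2g(\Sigma)$.

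For the signature piece, the running hypothesis that every crossing has type $b$ puts us in the setting of \Cref{BWdefinite}, giving that $\cG_W$ is positive definite and $\cG_B$ is negative definite. Hence $\sig(\cG_W) = b_1(W)$ and $\sig(\cG_B) = -b_1(B)$, and combining with \eqref{eqn:bettisum} yields
$$\sig(\cG_W) - \sig(\cG_B) = b_1(W) + b_1(B) = 2g(\Sigma) + c(D).$$
For the Euler piece, the incidence identifications give $\tfrac12 e(W,L) = -\mu_W(D) = \sum_{x\text{ type I}} \eta_x$ and $\tfrac12 e(B,L) = -\mu_B(D) = -\sum_{x\text{ type II}} \eta_x$. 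Since every crossing is of exactly one of the types I or II, subtracting produces
$$\tfrac12\bigl(e(W,L)-e(B,L)\bigr) = \sum_{x\text{ type I}}\eta_x + \sum_{x\text{ type II}}\eta_x = \sum_x \eta_x = -c(D),$$
the last equality because $\eta_x = -1$ for every crossing by hypothesis. Adding the two contributions yields $\sigma_W(L) - \sigma_B(L) = (2g(\Sigma) + c(D)) + (-c(D)) = 2g(\Sigma)$, as required.

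I do not foresee any serious obstacle here: the argument is essentially bookkeeping, combining three standard inputs, namely the definiteness of the two checkerboard pairings, the Betti-number identity \eqref{eqn:bettisum}, and the local incidence-number computation. The one point to handle carefully is sign consistency, since the built-in asymmetry between the definitions of $\mu_W$ and $\mu_B$ (one uses $-\eta_x$, the other uses $+\eta_x$) is exactly what forces the $c(D)$ contributions to cancel rather than add; any slip there would propagate into the final identity.
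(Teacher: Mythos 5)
Your proposal is correct and takes essentially the same route as the paper: decompose $\si_F(L) = \sig(\cG_F) + \tfrac12 e(F,L)$, use \Cref{BWdefinite} and \eqref{eqn:bettisum} to evaluate the signature difference as $b_1(W)+b_1(B)=2g+c(D)$, and show the Euler correction contributes $-c(D)$. The only (minor) difference is that the paper evaluates $\mu_W(D)-\mu_B(D)=c(D)$ by first rewriting each $\mu$ in terms of the crossing signs $\ep_x$ and the counts $c_\pm(D)$, while you observe directly that since every $\eta_x=-1$, the sum over all crossings of $\eta_x$ is $-c(D)$; your version is a bit more streamlined since it avoids introducing the orientation-dependent quantities $c_\pm(D)$, which ultimately cancel anyway.
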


\begin{proof}
In general we have 
\begin{eqnarray*}
\si_{W}(L) &=& \sig(\cG_W)-\mu_W(D), \\
\si_{B}(L) &=& \sig(\cG_B) -\mu_B(D). 
\end{eqnarray*}

Since all crossings have type $b$ and referring to \Cref{figure-7}, we see that 
\begin{eqnarray*}
\mu_W(D)&=& \sum_{x \; \text{type I}}-\eta_x = \sum_{x \; \text{type I}}\ep_x=
c_{+}(D), \\
\mu_B(D)&=& \sum_{x \; \text{type II}}\eta_x= \sum_{x \; \text{type II}}\ep_x=
-c_{-}(D),
%-\sum_{x\ \text{type}\ \text{II}}\eta(x) &=& -\sum_{x\ \text{type}\ \text{II}}\ep(x)=-c_{+}(D), \\
%\sum_{x\ \text{type}\ \text{I}}\eta(x) &=& \sum_{x\ \text{type}\ \text{I}}\ep(x)=c_{-}(D). 
\end{eqnarray*} 
where $c_+(D)$ is the number of positive crossings of $D$
and $c_-(D)$ is the number of negative crossings.
Hence 
\begin{equation} \label{eqn:correction}
\mu_W(D) -\mu_B(D) = c_{+}(D) + c_{-}(D) = c(D).
\end{equation}

\begin{figure}[ht]
\centering
\includegraphics[scale=0.90]{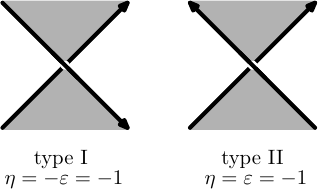} 
\vspace{-3mm}
\caption{\small Two type $b$ crossings.}\label{figure-7}
\end{figure}

\Cref{BWdefinite} shows that $W$ is positive definite and $B$ is negative definite, hence

\begin{eqnarray*}
\si_{W}(L)-\si_{B}(L) &=& \sig(\cG_W)-\mu_W(D)-(\sig(\cG_B)-\mu_B(D)), \\
 &=& b_1(W)+b_1(B)- (\mu_W(D) -\mu_B(D)) =2g,
\end{eqnarray*}
where the last step follows from equations \eqref{eqn:bettisum} and \eqref{eqn:correction}.
This completes the proof.
\end{proof}

\begin{proposition} \label{prop-BW}
Let $D\subset \Si$ be a cellularly embedded, checkerboard colorable link diagram. Then $D$ is alternating if and only if the black and white surfaces are definite and of opposite sign.
\end{proposition}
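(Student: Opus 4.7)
The forward direction is immediate from \Cref{BWdefinite}, so I focus on the converse. Assume $W$ is positive definite and $B$ is negative definite; I plan to show that every crossing of $D$ has incidence number $\eta_x = -1$ (type $b$), which by the chosen coloring convention makes $D$ alternating. The argument is a signature count that parallels the previous lemma but runs in the opposite direction: definiteness will play the role that the alternating hypothesis played there.

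By hypothesis, $\sig(\cG_W) = b_1(W)$ and $\sig(\cG_B) = -b_1(B)$, and combining with \eqref{eqn:bettisum} gives
\[
\sig(\cG_W) - \sig(\cG_B) = b_1(W) + b_1(B) = 2g + c(D).
\]
On the other hand, writing $\sig(\cG_F) = \si_F(L) + \mu_F(D)$ for $F \in \{W,B\}$ and using the direct count
\[
\mu_W(D) - \mu_B(D) = \sum_{x\text{ type I}}(-\eta_x) - \sum_{x\text{ type II}}\eta_x = -\sum_{x} \eta_x,
\]
the previous identity rearranges to
\[
\si_W(L) - \si_B(L) = 2g + c(D) + \sum_x \eta_x.
\]
If I could argue independently that $\si_W(L) - \si_B(L) = 2g$, then $\sum_x \eta_x = -c(D)$, forcing $\eta_x = -1$ at every crossing and hence making $D$ alternating.

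The main obstacle is establishing $\si_W(L) - \si_B(L) = 2g$ for any non-split link $L \subset \Si \times I$ admitting spanning surfaces, without presupposing that $D$ is alternating (the preceding lemma derives this equality under the alternating hypothesis, which is exactly what we want to conclude). By \Cref{GL-formula}, $\si_F(L)$ depends only on the $S^*$-equivalence class of $F$, so $\si_W(L) - \si_B(L)$ is a link invariant; moreover, as noted in the paragraph following \Cref{GL-formula}, $W$ and $B$ represent the two distinct $S^*$-classes with $W \sim_{S^*} B \#_\tau \Si$. The question thus reduces to tracking how the tubing $B \leadsto B \#_\tau \Si$ changes $\sig(\cG_F) - \mu_F(D)$. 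The Gordon-Litherland pairing on the new $H_1(\Si)$-generators is a sum of $g$ hyperbolic planes and contributes $0$ to the signature, so the entire jump of $2g$ must come from the Euler number term $\tfrac12 e(F,L)$ via a framing calculation on the tube; this is the step I expect to require the most care.
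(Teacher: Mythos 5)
The forward direction is fine, and your algebra is correct: under the definiteness hypothesis one gets
\[
\si_W(L) - \si_B(L) = \bigl(b_1(W) + b_1(B)\bigr) - \bigl(\mu_W(D)-\mu_B(D)\bigr) = 2g + c(D) + \sum_x \eta_x,
\]
so that $\si_W(L)-\si_B(L)=2g$ would indeed force $\sum_x \eta_x = -c(D)$ and hence $D$ alternating. But the plan you sketch for closing the gap misreads both what needs to be shown and where the bound comes from, so there is a genuine gap.

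First, the equality $\si_W(L)-\si_B(L)=2g$ is \emph{not} something you can establish independently of definiteness and then feed back in: for a general non-split checkerboard colorable $L$, the difference $\si_W(L)-\si_B(L)$ can be any even integer in $[-2g,2g]$. What you actually need is only the one-sided estimate $\si_W(L)-\si_B(L)\le 2g$. Your own identity already gives the complementary bound: since each $\eta_x\in\{\pm1\}$, we have $\sum_x\eta_x\ge -c(D)$ and hence $\si_W(L)-\si_B(L)\ge 2g$ under the definiteness hypothesis. So the whole argument reduces to $\si_W(L)-\si_B(L)\le 2g$, and then equality and alternation follow; this is precisely the inequality the paper extracts (it phrases it as $|\si_W(L)-\si_B(L)|\le 2g$ and runs a chain of inequalities rather than isolating the two bounds, but it is the same content).

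Second, the mechanism you propose for the $2g$ is wrong in a way that matters. The tube $\tau$ joins $F$ to a parallel copy of $\Si$ near $\Si\times\{0\}$, far away from $L=\partial F$; consequently the induced framing of every boundary component is unchanged and $e(F\#_\tau\Si, L)=e(F,L)$. There is no contribution from the Euler-number term at all. The bound $|\si_W(L)-\si_{W\#_\tau\Si}(L)|\le 2g$ comes entirely from the signature side: $H_1(W)$ embeds as a corank-$2g$ subspace of $H_1(W\#_\tau\Si)$ with $\cG_W$ the restriction of $\cG_{W\#_\tau\Si}$, and the signature of a symmetric form changes by at most the corank when restricting. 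Your assertion that the $H_1(\Si)$ block ``is a sum of $g$ hyperbolic planes and contributes $0$ to the signature'' also does not close the gap: the restriction of $\cG_{W\#_\tau\Si}$ to the $H_1(\Si)$ summand actually vanishes identically (each pair only meets in transverse intersection points, which contribute $0$), and the presence of cross terms with $H_1(W)$ means the signature is not additive over this decomposition, so no exact value of the jump is forced by that observation alone. Replace the tubing-Euler-number step by the corank-$2g$ restriction bound and your proof becomes a correct, slightly repackaged version of the paper's argument.
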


\begin{proof}
If $D$ is alternating, then \Cref{BWdefinite} gives the desired conclusion.

Conversely, suppose $B$ is negative definite and $W$ is positive definite. Let $a_{\pm}$ be the number of type $a$ crossings of $D$ with $\ep_x=\pm 1$, and $b_{\pm}$ be the number of type $b$ crossings of $D$ with $\ep_x=\pm 1$. Then  
 $$
\mu_{W}(D) = a_{-}-b_{+} \qquad \text{ and } \qquad
\mu_{B}(D) = -a_{+}+b_{-}.$$
It follows that
\begin{eqnarray*}
\mu_{W}(D)-\mu_{B}(D) &=& a_{-} - b_{+}-(-a_{+}+b_{-}), \\
&=&a(D)-b(D),
\end{eqnarray*}
where  $a(D)=a_{+}+a_{-}$ is the total number of type $a$ crossings and $b(D)=b_{+}+b_{-}$ is the total number of type $b$ crossings.

Therefore, 
\begin{equation}\label{eqn:dagger}
|\mu_{W}(D)-\mu_{B}(D)|\leq c(D),
\end{equation}
with equality if and only if $a(D)=0$ or $b(D)=0$. In the first case, all crossings have type $b$, and in the second, they all have type $a$. In either case, we see that $D$ is alternating. Thus, it only remains to show that $|\mu_{W}(D)-\mu_{B}(D)|= c(D)$.

Given a spanning surface $F$ for $L$, by the discussion after \Cref{GL-formula}, we obtain a new surface $F\#_\tau \Si$ by connecting it to a parallel copy of $\Si$ by a thin tube $\tau$. %Unless $\Si$ is the 2-sphere, 
The surfaces $F$ and $F\#_\tau \Si$ are not $S^*$-equivalent.

Since $B$ and $W$ are not $S^*$-equivalent, 
%unless $\Si$ is a 2-sphere, 
and since a non-split link in $\Si \times I$ has exactly two $S^*$-equivalence classes of spanning surfaces, it follows that $B$ and $W\#_\tau \Si$ must be $S^{*}$-equivalent. By \Cref{GL-formula}, $\si_{B}(L)=\si_{W\#_\tau \Si}(L)$. We have $|\si_{W}(L)-\si_{W\#_\tau \Si}(L)|\leq 2g$.
Hence $|\si_{W}(L)-\si_{B}(L)|\leq 2g$.  
Further, since $B$ and $W$ are definite surfaces of opposite sign,
we see that $|\sig(\cG_W)-\sig(\cG_B)| = b_{1}(W)+b_{1}(B).$
These two observations, combined with \cref{eqn:bettisum}
and inequality \eqref{eqn:dagger}, show that:
\begin{eqnarray*}
2g &\geq & |\si_{W}(L)-\si_{B}(L)|, \\
  &=& |\sig(\cG_W)-\mu_{W}(D)-(\sig(\cG_B)-\mu_{B}(D))|,\\
 &\geq & |\sig(\cG_W)-\sig(\cG_B)|-|\mu_{W}(D)-\mu_{B}(D)|,\\
 &=& b_{1}(W)+b_{1}(B)-|\mu_{W}(D)-\mu_{B}(D)|,\\
 &=& 2g+c(D)-|\mu_{W}(D)-\mu_{B}(D)| \;\; \geq \;\; 2g. 
\end{eqnarray*}  
Therefore we must have equality throughout, and it follows that $D$ is alternating.  
\end{proof}

%%%%%%%%%%%%%%%%%%%%%%%%%%%%%%%%%%%%%%%%%%%%%%%%%%%%%%%%%%%%%%%%%%%%%%%%%%%%%%%%%%%%%%%%%%%%%%%%%%%%%%%%%
\setcounter{section}{4} \noindent
\subsection{Characterization of alternating links in thickened surfaces} \label{S4} 
In this section, we establish our main result,
\Cref{thm:main2}. It is a combination of \Cref{BWdefinite} and \Cref{prop-BW}.

If $L$ is a link in the thickened surface $\Si \times I,$ let $\nu(L)$ be a tubular neighborhood of $L$ and  let $X_L=\Si\times I \sm \Int(\nu(L))$ denote the exterior of $L$.
The next result is a restatement of part of Proposition 6.3 from \cite{Carter-Silver-Williams-2014}.
Recall that a link $L$ in a 3-manifold $M$ is said to be \textit{local} if it is contained in an embedded 3-ball $B$ in $M.$

\begin{proposition}[Carter-Silver-Williams]\label{prop-CSW}
If $\Si$ is a surface of genus $g\geq 1$ and $L$ is a non-split and non-local link in $\Si\times I$, then the exterior $X_L$ is irreducible.
\end{proposition}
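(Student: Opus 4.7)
The plan is to deduce irreducibility of $X_L$ from irreducibility of the ambient manifold $\Si\times I$, using the non-split and non-local hypotheses to rule out the two ways an embedded sphere in $X_L$ could fail to bound a ball in $X_L$.

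First, I would show that $\Si\times I$ is irreducible when $g(\Si)\geq 1$. Since $\Si$ is aspherical with universal cover $\RR^2$, the product $\Si\times I$ has universal cover $\RR^2\times I\subset\RR^3$, which is irreducible by the Schoenflies theorem. A standard covering-space argument then transfers irreducibility downstairs: an embedded sphere $S\subset\Si\times I$ lifts homeomorphically to a sphere in the universal cover, bounds a ball there, and this ball projects injectively because the deck group acts freely and properly discontinuously on the disjoint translates of the bounded region.

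Now let $S\subset X_L$ be an arbitrary embedded two-sphere. Viewed inside $\Si\times I$, the sphere $S$ bounds a unique 3-ball $B$, namely the component of $(\Si\times I)\sm S$ disjoint from $\Si\times\{0,1\}$. Since $S\cap\nu(L)=\emptyset$ and each component of $\nu(L)$ is connected, every component of $L$ lies entirely inside or entirely outside $B$, giving three mutually exclusive cases. First, if $L\cap B=\emptyset$, then $B\subset X_L$ and $S$ bounds a ball in $X_L$, as required. Second, if $L\subset B$, then $L$ is contained in a 3-ball of $\Si\times I$, directly contradicting non-locality. Third, if $L$ has components both inside and outside $B$, then $S$ topologically separates $L$ into two nonempty sublinks.

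The main obstacle is this third case: reconciling a topological splitting by a 2-sphere with the diagrammatic definition of \emph{split} from Section~1. I would handle it by first isotoping $B$ into a vertical form $p^{-1}(U)$ for some embedded disk $U\subset\Si$, using that $B$ is a ball and that the fibers of $p\colon\Si\times I\to\Si$ are intervals; then the projected diagram decomposes as $p(L)=D_1\sqcup D_2$ on $\Si$, contradicting non-splitness. This geometric reduction is essentially the content of the cited \cite[Proposition~6.3]{Carter-Silver-Williams-2014}, and could alternatively be handled by appealing to Kuperberg's uniqueness theorem for irreducible representatives.
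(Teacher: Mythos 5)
Your argument follows the same strategy as the paper's sketch: establish irreducibility of $\Si\times I$ via the universal cover $\RR^2\times I$, then observe that a sphere in $X_L$ bounds a ball in $\Si\times I$ and use the non-local and non-split hypotheses to force that ball into $X_L$. You spell out the case analysis on where $L$ sits relative to the ball $B$ in slightly more detail than the paper (which simply asserts "the hypotheses ensure that $Y\subset X_L$"); the remaining hand-waving in your third case (isotoping $B$ to vertical form and producing a disconnected diagram) is comparable in level of detail to the paper's sketch, which likewise defers to \cite{Carter-Silver-Williams-2014} for the full argument.
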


\begin{proof} A detailed proof can be found in \cite{Carter-Silver-Williams-2014}, so we only  sketch the argument. Since $\Si$ has genus $g \geq 1$, the universal cover of $\Si\times I$ is
$\RR^2\times I$, which is irreducible, then by \cite[Proposition 1.6]{Hatcher-3manifolds} it follows that $\Si \times I$ is irreducible.
Any embedded 2-sphere in $X_L \subset \Si \times I$ must bound a 3-ball $Y \subset \Si \times I$, and the hypotheses ensure that $Y\subset X_L$. This completes the argument.
\end{proof}

The next result is an analogue of Lemma 3.1 from \cite{Greene}. 

\begin{lemma}[Greene] \label{lemma3-1}
If $F\subset \Si \times I$ is a definite surface with $\partial F =L,$ then $b_1(F)$ is
minimal over all spanning surfaces for $L$ which are $S^*$-equivalent to $F$ and have the same Euler number as $F$. If $F'$ is another such surface with $b_1(F')=b_1(F),$ then $F'$ is definite and of the same sign as $F$.
\end{lemma}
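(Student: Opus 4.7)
The plan is to reduce everything to the $S^*$-invariance of the signature defect $\si_F(L)$ (cited \Cref{GL-formula}) together with the standard linear-algebra inequality $|\sig(G)| \le b_1$ for a symmetric bilinear form on a rank-$b_1$ lattice, with equality forcing definiteness. The sign conventions work symmetrically, so I would treat the case when $F$ is positive definite and note that the negative case is identical.

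First, I would spell out the hypothesis: let $F'$ be any spanning surface for $L$ which is $S^*$-equivalent to $F$ and satisfies $e(F') = e(F)$. \Cref{GL-formula} gives $\si_F(L) = \si_{F'}(L)$, i.e.\
$$\sig(\cG_F) + \tfrac{1}{2}e(F,L) \;=\; \sig(\cG_{F'}) + \tfrac{1}{2}e(F',L).$$
By \Cref{eqn:Euler}, $e(F,L) = e(F) - \la(L)$ and $e(F',L) = e(F') - \la(L)$. Since $L$ and hence $\la(L)$ is fixed and $e(F) = e(F')$ by assumption, the Euler-number correction terms cancel, leaving $\sig(\cG_F) = \sig(\cG_{F'})$.

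Next, since $F$ is positive definite, $\sig(\cG_F) = b_1(F)$. The general inequality $\sig(\cG_{F'}) \le b_1(F')$ (the signature of a symmetric bilinear form is bounded above by the rank of its underlying lattice) combined with the equality $\sig(\cG_F) = \sig(\cG_{F'})$ yields
$$b_1(F) \;=\; \sig(\cG_F) \;=\; \sig(\cG_{F'}) \;\le\; b_1(F'),$$
which is precisely the claimed minimality of $b_1(F)$ within the $S^*$-class of $F$ at the prescribed Euler number.

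For the second assertion, suppose $F'$ also realizes $b_1(F') = b_1(F)$. Then $\sig(\cG_{F'}) = b_1(F')$, and since $\sig(G) + \nullity(G) \le b_1$ with equality of $\sig$ and $b_1$ forcing both $\nullity(G) = 0$ and the form to have no negative eigenvalues, $\cG_{F'}$ must be non-degenerate and positive definite, hence of the same sign as $\cG_F$. The main obstacle here is really just ensuring that the hypothesis ``$e(F') = e(F)$'' feeds cleanly into the invariance of $\sig(\cG_{F'})$; once Lemma 2.3 and equation \eqref{eqn:Euler} are in hand, the rest is linear algebra.
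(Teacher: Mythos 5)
Your proof is correct and follows essentially the same line of reasoning as the paper: invoke the $S^*$-invariance of $\si_F(L)$ from \Cref{GL-formula}, observe that equal Euler numbers make the correction terms agree so that $\sig(\cG_F) = \sig(\cG_{F'})$, and then apply the linear-algebra bound $|\sig| \le b_1$ with equality forcing definiteness. The only cosmetic difference is that the paper writes the argument once with $|\sig(\cG_F)|$ to cover both signs, whereas you fix the positive-definite case and appeal to symmetry; your explicit use of \cref{eqn:Euler} to cancel $\la(L)$ is a welcome clarification of a step the paper leaves implicit.
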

\begin{proof}
If $F'$ is $S^*$-equivalent to $F$, then \Cref{GL-formula} implies that $\si_F(L)=\si_{F'}(L).$ If, in addition, $e(F)=e(F')$, then it follows that $\sig(\cG_F)= \sig(\cG_F').$ 

Now suppose $F$ is definite. Then we have
$$b_1(F) = |\sig(\cG_F)| = |\sig(\cG_{F'})| \leq b_1(F'),$$
which shows the first claim. 

If, in addition, $b_1(F')=b_1(F),$ then we have $b_1(F') = |\sig(\cG_{F'})|,$ hence $F'$ must also be definite. Since $\sig(\cG_{F}) = \sig(\cG_{F'}),$ it follows that $F$ and $F'$ must have the same sign.
\end{proof}

\begin{corollary} If $F \subset \Si \times I$ is definite, then it is incompressible.
\end{corollary}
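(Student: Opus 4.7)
The plan is to argue by contradiction using \Cref{lemma3-1}. Assume $F\subset\Si\times I$ is definite and that $D$ is a compressing disk, i.e.\ an embedded $2$-disk with $D\cap F=\partial D$ and $\partial D$ not bounding a disk on $F$. The goal is to produce a spanning surface $F'$ for $L$ lying in the same $S^{*}$-equivalence class as $F$, with $e(F')=e(F)$ and $b_{1}(F')<b_{1}(F)$, contradicting the minimality statement in \Cref{lemma3-1}.

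The candidate $F'$ comes from the usual disk compression: inside the regular-neighbourhood ball $N(D)\cong D\times[-\varepsilon,\varepsilon]$, the surface $F$ meets $N(D)$ in an annular collar of $\partial D$, and I replace this annulus by the two parallel disks $D\times\{\pm\varepsilon\}$, obtaining a new surface $F^{\sharp}$. Because the modification is supported entirely inside a ball, it is precisely the tube-removal move in the definition of $S^{*}$-equivalence, so $F^{\sharp}$ is $S^{*}$-equivalent to $F$. The boundary $\partial F=L$ and the framing $F$ induces on $L$ are left untouched, giving $e(F^{\sharp})=e(F)$, and a standard count yields $\chi(F^{\sharp})=\chi(F)+2$. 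If $\partial D$ is non-separating on $F$, then $F^{\sharp}$ is connected with $b_{1}(F^{\sharp})=b_{1}(F)-2$, and we are done.

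The genuine obstacle is the separating case, in which $F^{\sharp}=F^{\sharp}_{1}\sqcup F^{\sharp}_{2}$ with $\partial F^{\sharp}_{1}=L$ and $F^{\sharp}_{2}$ a closed surface in the exterior $X_{L}$. Since $\partial D$ is essential on $F$, the piece $F^{\sharp}_{2}$ cannot be a $2$-sphere, so $b_{1}(F^{\sharp}_{2})\ge 1$ and additivity of Betti numbers forces $b_{1}(F^{\sharp}_{1})<b_{1}(F)$. Using the identification of $S^{*}$-classes with elements of $H_{2}(\Si\times I,L;\ZZ/2)$ from \cite{Boden-Chrisman-Karimi-2021}, the class $[F^{\sharp}_{2}]\in H_{2}(\Si\times I;\ZZ/2)\cong \ZZ/2$ is either $0$ or $[\Si]$. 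In the first case $F^{\sharp}_{1}$ is automatically $S^{*}$-equivalent to $F$, and any sphere components of $F^{\sharp}_{2}$ can be pushed across the balls they bound by \Cref{prop-CSW}, so \Cref{lemma3-1} is directly contradicted. In the second case $F^{\sharp}_{1}$ lies in the opposite $S^{*}$-class; to remedy this I tube $F^{\sharp}_{1}$ to a parallel copy of $\Si$, landing back in the $S^{*}$-class of $F$, and then invoke the degree-one count for the projection $F^{\sharp}_{2}\to\Si$ to bound $b_{1}(F^{\sharp}_{2})\ge 2g$, yielding $b_{1}(F^{\sharp}_{1}\#_{\tau}\Si)=b_{1}(F^{\sharp}_{1})+2g\le b_{1}(F)$.

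The subtlest point, which I expect to be the main obstacle, is ruling out the borderline equality $b_{1}(F^{\sharp}_{1})+2g=b_{1}(F)$ in this last sub-case; this forces $F^{\sharp}_{2}$ to be a genus-$g$ closed surface representing $[\Si]$ in $X_{L}$, and handling it cleanly appears to require either a rigidity result identifying $F^{\sharp}_{2}$ with a parallel copy of $\Si$ (so that the extra tubing becomes an $S^{*}$-trivial move and the strict inequality is recovered) or a more direct argument showing that such an $F^{\sharp}_{2}$ cannot coexist with a definite $F$. Either way, a contradiction with \Cref{lemma3-1} is obtained and $F$ must be incompressible.
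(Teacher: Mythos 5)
Your argument follows the same basic strategy as the paper's (compress, keep $e$ fixed, lower $b_1$, contradict \Cref{lemma3-1}), but you carry out a much finer case analysis than the paper does. The paper's proof is only two sentences and simply asserts that the compressed surface is $S^*$-equivalent to $F$ with smaller $b_1$; it does not distinguish the separating from the non-separating case. You are right that the separating case deserves attention, and your treatment of the non-separating case and of the separating case with $[F^{\sharp}_2]=0$ is correct.

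The genuine gap is exactly the one you flag in your last paragraph: the borderline equality $b_1(F^{\sharp}_1)+2g=b_1(F)$, i.e.\ $F^{\sharp}_2$ an embedded closed orientable surface of genus exactly $g$ representing $[\Si]$. You gesture at two fixes but commit to neither, and the first one does not work as stated: even if a rigidity theorem identified $F^{\sharp}_2$ with a parallel copy of $\Si$, tubing $F^{\sharp}_1$ to a parallel copy of $\Si$ is never an ``$S^{*}$-trivial move'' --- by construction it \emph{changes} the $S^{*}$-class, which is the whole reason you introduced it. So as written the proof does not close. Here is one way to finish it using only tools available at that point in the paper. In the equality case, $F^{\sharp}_1\#_\tau\Si$ is $S^{*}$-equivalent to $F$, has the same Euler number, and satisfies $b_1(F^{\sharp}_1\#_\tau\Si)=b_1(F)$; hence the second clause of \Cref{lemma3-1} forces $F^{\sharp}_1\#_\tau\Si$ to be definite of the same sign as $F$. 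But the $H_1(\Si)$ summand of $H_1(F^{\sharp}_1\#_\tau\Si)$ coming from the tubed-in parallel copy of $\Si$ is a $2g$-dimensional isotropic subspace: for any two classes $a,b$ supported on that copy of $\Si$, one computes $\cG(a,b)=\tfrac12\bigl(\lk(\tau a,b)+\lk(\tau b,a)\bigr)=0$ because the copy of $\Si$ lies near $\Si\times\{0\}$ and is two-sided (this is the same observation that underlies the inequality $|\si_W(L)-\si_B(L)|\le 2g$ in \Cref{prop-BW}). A definite form has no nonzero isotropic vectors, so $g\ge 1$ gives a contradiction. (An alternative, once \Cref{lemma3-3} is available, is to note that the piece $F_2\subset F$ cut off by $\partial D$ is itself definite with connected unknotted boundary, which forces $\sig(\cG_{F_2})=0$ and hence $b_1(F_2)=0$, contradicting essentiality of $\partial D$; this shows the nontrivial separating case cannot occur for a definite $F$ at all.) Finally, a small wording issue: $F^{\sharp}_2$ is connected since $\partial D$ is a single circle, so there are no ``sphere components'' to push off in the $[F^{\sharp}_2]=0$ sub-case; the point is simply that $F^{\sharp}_2$ is not a sphere, so $b_1(F^{\sharp}_2)\ge 1$ (indeed $\ge 2$ if orientable) and $b_1(F^{\sharp}_1)<b_1(F)$.
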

\begin{proof} Suppose to the contrary that $F$ is compressible. Let $F'$ be the surface obtained from $F$ by a compression. Then $F'$ is $S^*$-equivalent to $F$ and $b_1(F') < b_1(F).$ Further, $e(F') =e(F)$. However, this is impossible, for it would contradict \Cref{lemma3-1} if $F$ is definite.
\end{proof}

\begin{lemma}\label{lemmaA}
Let $S \subset \Si \times I$ be a compact, closed, connected, oriented surface with
$[S]\neq 0$ in $H_2(\Si\times I)$. Then $g(S)\geq g(\Si)$. If $g(S)= g(\Si)$,
then $S$ is incompressible in $\Si \times I$.
\end{lemma}
\begin{proof}
Any surface $S$ with genus $g(S)< g(\Si)$ has $[S]= 0$ in $H_2(\Si\times I)$. This proves the first statement. 

Assume now that $g(S)= g(\Si)$. If $S$ is not incompressible, there exists a non-contractible simple closed curve $\ga$ in $S$ which bounds a disk in $\Si \times I$. Let $S'$ be the surface obtained by cutting $S$ along $\ga$ and gluing in two disks along the newly created boundary components. Then $[S'] =[S]$ in $H_2(\Si \times I).$ If $\ga$ is non-separating on $S$, then $g(S') = g(S)-1 < g(\Si)$ and $[S']=0$, a contradiction. Otherwise, if $\ga$ is separating on $S$, then $S'$ is a disjoint union $S_1' \sqcup S'_2$ of two surfaces of positive genus satisfying $g(S')=g(S_1') + g(S'_2) =g(S)$. Hence $0<g(S_i')< g(S)$ for $i=1,2$. It follows that $[S'_1] =0$ and $[S_2']=0$ in $H_2(\Si \times I).$ 
Thus $[S'] = [S_1'] + [S_2'] = 0,$ again a contradiction. This proves the second statement.
\end{proof}

\begin{remark} \label{Cor-to-Waldhausen}
In the above lemma, if $g(S)= g(\Si) \geq 1$, then Corollary 3.2 of \cite{Waldhausen-1968}
applies to show that $S$ is isotopic to $\Si \times \{t_0\}$ for $0<t_0<1.$  
\end{remark}

The next result is Lemma 3.3 from \cite{Greene}. 
The proof  is the same as in \cite{Greene} so we will not repeat it here.

\begin{lemma}[Greene] \label{lemma3-3}
If $S$ is definite and $S' \subset S$ is a compact subsurface with connected boundary, then $S'$ is definite.
\end{lemma}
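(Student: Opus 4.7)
The plan is to show that $\cG_{S'}$ arises as the pullback of $\cG_{S}$ under the inclusion-induced map $i_{*}\colon H_{1}(S')\to H_{1}(S)$, and then to use the connected-boundary hypothesis to check that $i_{*}$ is injective. Once both steps are in hand, the pullback of a positive (resp.\ negative) definite form by an injection is again positive (resp.\ negative) definite, so the conclusion is immediate.

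For the identification $\cG_{S'}(a,b)=\cG_{S}(i_{*}a,i_{*}b)$, the key observation is that both the transfer $\tau a$ and the linking number $\lk(\tau a,b)$ appearing in the Gordon-Litherland pairing depend only on the homology classes of the curves in $\Si\times I$ together with the $\{\pm 1\}$-bundle structure coming from the normal bundle of the ambient surface. Since the normal bundle of $S$ restricts to that of $S'$, and transfer and linking are local in the curves, the identity follows directly from the definition reviewed in \Cref{S2}.

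For the injectivity of $i_{*}$, I would apply Mayer-Vietoris to the decomposition $S=S'\cup T$ with $T=\overline{S\smallsetminus S'}$ and $S'\cap T=\gamma$, where $\gamma$ denotes the interior portion of $\partial S'$. Since each of $S$, $S'$, and $T$ is a compact surface with nonempty boundary, each has $H_{2}=0$, and the relevant segment of the sequence reads
$$H_{1}(\gamma)\lto H_{1}(S')\oplus H_{1}(T)\lto H_{1}(S).$$
Under the connected-boundary hypothesis, $\gamma$ is a single circle whose class in $H_{1}(S')$ vanishes because $\gamma$ bounds the subsurface $S'$. Consequently any $\alpha\in H_{1}(S')$ mapping to $0$ in $H_{1}(S)$ must already be $0$, so $i_{*}$ is injective.

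The main technical obstacle is arranging the Mayer-Vietoris decomposition cleanly when $\partial S'$ also contains arcs on $\partial S=L$, so that the interior portion $\gamma$ is a single closed $1$-submanifold that is null-homologous in $S'$; this is where the connected-boundary hypothesis does its real work. With that in place, the pullback principle immediately yields that $\cG_{S'}$ is definite with the same sign as $\cG_{S}$.
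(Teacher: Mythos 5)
Your proof is correct and follows the same two-step strategy---showing $\cG_{S'}$ is the restriction of $\cG_{S}$ along the inclusion-induced map, then showing $i_{*}\colon H_{1}(S')\to H_{1}(S)$ is injective because $\partial S'$ is connected---that the paper defers to in citing Greene's Lemma 3.3. One remark: the case you flag as a ``technical obstacle,'' in which $\partial S'$ meets $\partial S$ and the interior portion $\gamma$ is a union of arcs rather than a circle, is in fact the easy case, since a disjoint union of arcs has $H_{1}(\gamma)=0$ and Mayer--Vietoris then yields injectivity of $i_{*}$ directly, with no null-homology condition required.
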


We can now prove the following analogue of Lemma 3.4 of \cite{Greene} for links in thickened surfaces. 

\begin{lemma}\label{lemma3-4}
Let $L$ be a link in $\Si\times I$, where $\Si$ has genus $g\geq 1$. Suppose further that
$L$ has a positive definite spanning surfaces $P$ and a negative definite spanning surface $N$.
If $P$ and $N$ intersect transversely in $X_L$ such that the number of components of $P \cap N \cap X_L$ is minimized (up to isotopy), then $P\cap N\cap X_L$ does not contain a simple closed curve.
\end{lemma}

\begin{proof}
Suppose $P$ and $N$ are positive and negative definite surfaces, respectively, and $\ga$ is a simple closed curve contained in $P\cap N\cap X_L$.
Let $\nu(\ga)$ be a small regular neighborhood of $\ga$ in $\Si\times I$, which contains no other intersection of $P$ and $N$. Clearly $\nu(\ga)$ is a $D^2$ bundle over $\ga$, and since $\Si\times I$ is orientable, so is $\nu(\ga)$ and it follows that $\nu(\ga)\approx S^1\times D^2$ is a trivial bundle.
The annuli $P \cap \nu(\ga)$ and $N \cap \nu(\ga)$ intersect only in $\ga$, so the framing $\lb\ga \rb_{P}$ of $\ga$ in $P$ is equal to the framing $\lb\ga \rb_{N}$ of $\ga$ in $N$.  Since $P$ and $N$ are positive and negative definite, respectively, we have  
$$\lb\ga \rb_{N}\leq 0 \leq \lb\ga \rb_{P}.$$
Therefore, $\lb\ga \rb_{P}=\lb\ga \rb_{N}=0$. It follows that $\ga$ is null-homologous in both $P$ and $N$. Thus, $\ga$ is separating on $P$ and $N$. Let $P'$ and $N'$ be orientable subsurfaces of $P$ and $N$, respectively, with $\partial P'=\partial N'=\ga$. By choosing $\ga$ an innermost curve on $N$, we can arrange that $N'$ is disjoint from $P.$ By \Cref{lemma3-3}, $P'$ is positive definite and $N'$ is negative definite. 
 
Set $V = P' \cup_ga N'$. Since $P'$ and $N'$ are both orientable, $V=P' \cup_\ga N'$ is a closed orientable surface,
and $\frac{1}{2}e(P',\ga)=0=\frac{1}{2}e(N',\ga)$.
Therefore, $\si_{P'}(\ga) = \sig(\cG_{P'})$ and $\si_{N'}(\ga)=\sig(\cG_{N'})$. 

Suppose firstly that $P'$ and $N'$ are $S^*$-equivalent.
Then 
$$0\leq b_{1}(P') = \sig(\cG_{P'}) = \si_{P'}(\ga) = \si_{N'}(\ga) = \sig(\cG_{N'})=-b_{1}(N')\leq 0.$$
Therefore, $P'$ and $N'$ are disks in this case.

Suppose now that $P'$ and $N'$ are not $S^*$-equivalent.
Then $P'$ and $N'\#_\tau \Si$ are $S^*$-equivalent, and so
$\sig(\cG_{P'}) = \sig(\cG_{N' \#_\tau \Si})$.
Therefore, if $g=\text{genus}(\Si),$ we see that
$$2g \geq \sig(\cG_{N' \#_\tau \Si}) - \sig(\cG_{N'}) = \sig(\cG_{P'}) - \sig(\cG_{N'}) = b_1(P') + b_1(N').$$ 
In particular, $V$ is a closed, orientable surface with $\text{genus}(V) \leq g$.
In fact, its genus must be exactly $g$, since otherwise we would have $[V] = 0$ in  $H_2(\Si;\ZZ_2)$, which would  imply that $P'$ and $N'$ are $S^*$-equivalent.

\Cref{lemmaA} applies and shows that $V$ is incompressible in $\Si \times I.$ Therefore, by
\cite[Corollary 3.2]{Waldhausen-1968}, it follows that $V$ is isotopic to $\Si \times \{t_0\}$ for some $0<t_0<1$ (see \Cref{Cor-to-Waldhausen}). Since $g(V)\geq 1$, there is a non-separating simple closed curve $\al$ in either $P'$ or $N'$. Assume $\al\subset P'$. Then $[\al]\neq 0 \in H_1(P')$, so $\lb \al^{}_{} \rb_{P'}>0$. On the other hand, since $\al \subset P'\subset V$ and $V$ is isotopic to $\Si \times \{t_0\}$, the framing of $\al$ in $P'$ is the same as the framing of $\al$ in $V$, so $\lb \al^{}_{} \rb_{P'}=0$. This is a contradiction, therefore, $P'$ and $N'$ are $S^*$-equivalent. It follows that $P'$ and $N'$ are both disks, and $V$ is a 2-sphere. By \Cref{prop-CSW}, $X_L$ is irreducible, thus, $V$ bounds a 3-ball. Using this ball, we can set-up an isotopy which separates $P'$ and $N'$, and makes the number of components of $P \cap N \cap X_L$ smaller. This is a contradiction, and the result follows.
\end{proof}

\begin{theorem} \label{thm:main2} 
Suppose $L \subset \Si \times I$ is a non-split link with minimal genus.  
Then $L$ is alternating if and only if there exist positive and negative definite spanning surfaces for $L$.
\end{theorem}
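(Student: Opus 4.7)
The plan is to treat the two directions separately, using \Cref{prop-BW} as the main engine. The forward direction is essentially immediate: if $L$ is non-split, alternating, and of minimal genus, then any connected alternating diagram $D$ for $L$ on $\Si$ is automatically cellularly embedded (otherwise a non-contractible simple closed curve in $\Si \sm D$ would allow a destabilization, contradicting minimal genus), and \Cref{BWdefinite} then supplies definite checkerboard surfaces of opposite sign.

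For the converse, I would start from the given positive definite surface $P$ and negative definite surface $N$ and reverse-engineer an alternating diagram for $L$. First, apply \Cref{lemma3-4} to replace $P$ and $N$ by definite spanning surfaces of the same respective signs whose intersection $P\cap N\cap X_L$ contains no simple closed curves; then $P\cap N$ is $L$ together with finitely many properly embedded arcs $\alpha_1,\ldots,\alpha_n$ in $X_L$. After a small ambient isotopy, arrange $P$ and $N$ to be in generic position with respect to the projection $p\co \Si\times I\to\Si$, so that $p$ restricts to an immersion on each of $P$ and $N$ and the transverse double arcs of the combined projection are precisely the images of the $\alpha_i$.

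Next, along each arc $\alpha_i$, perturb $P$ slightly upward and $N$ slightly downward in the $I$-direction (consistently with the ambient orientation). This resolves the interior intersections and introduces a transverse double point in $p(L)$ at each perturbation site, producing a link diagram $D$ on $\Si$ whose crossings are in bijection with $\{\alpha_1,\ldots,\alpha_n\}$. By construction, $D$ is checkerboard colorable with $P$ and $N$ serving as the white and black surfaces, respectively. Since $L$ is non-split and of minimal genus, $D$ must be cellularly embedded. Now \Cref{prop-BW} applies directly: $D$ is a cellularly embedded, checkerboard colorable diagram whose checkerboard surfaces are definite of opposite sign, so $D$ is alternating, and hence $L$ is alternating.

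The main technical obstacle will be the realization step: showing that, after these isotopies and perturbations, the pair $(P,N)$ genuinely presents itself as the checkerboard decomposition of a bona fide link diagram. This requires controlling the projections of $P$ and $N$ individually so that their images tile the complementary regions of the projected crossing graph, and so that the only failures of injectivity of $p$ arise from the chosen perturbations of the $\alpha_i$. This step parallels Greene's analogous construction in $S^3$, and what makes it go through in the thickened-surface setting is precisely the output of \Cref{lemma3-4}: the interior intersection consists of properly embedded arcs (not closed curves), which is the local hypothesis needed to resolve each intersection into a single well-defined crossing rather than a nontrivial immersed configuration.
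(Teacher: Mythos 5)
The forward direction is fine and matches the paper. The converse, however, has a substantial gap that your own closing paragraph correctly identifies but does not close. You propose to isotope $P$ and $N$ into generic position for the ambient projection $p\colon\Si\times I\to\Si$, perturb along the double arcs, and read off a diagram on $\Si$ whose checkerboard surfaces are $P$ and $N$. But there is no reason $P\cup N$ should sit over $\Si$ in this way. A priori the surfaces can be embedded so that $p$ folds them over badly, and no small perturbation fixes that: the ambient projection is not the right map to use. The actual content of the converse is to produce the correct ``horizontal'' surface to project onto, and this requires several steps your proposal omits.

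Concretely, the paper does the following after \Cref{lemma3-4}: (1) a sign analysis shows that every double arc is \emph{standard} rather than \emph{parallel} (this is where the definiteness of both $P$ and $N$ is used, via the identity $\#(\la^P_i\cap\la^N_i)=\tfrac12\lb[K_i]\rb_P-\tfrac12\lb[K_i]\rb_N=\lvert\sum\ep_x\rvert$), which is exactly what guarantees the local model near each double arc looks like a crossing between two checkerboard surfaces; (2) collapsing these standard neighborhoods yields an embedded closed surface $S\subset\Si\times I$ with $\nu(P\cup N)\cong\nu(S)$; (3) an Euler characteristic computation combining the signature formula with the $S^*$-equivalence classification of spanning surfaces shows $\chi(S)=\chi(\Si)$; (4) one proves $S$ is incompressible; and (5) Waldhausen's theorem then forces $S$ to be isotopic to a level surface $\Si\times\{t_0\}$. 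Only after this isotopy does projection give a diagram to which \Cref{prop-BW} applies. Your proposal skips (1)--(5) entirely; without the standard-arc claim the local picture need not be a crossing, and without the incompressibility and Euler characteristic arguments plus Waldhausen there is no justification that $P\cup N$ sits over a copy of $\Si$ at all. You should incorporate these steps to make the argument go through.
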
 

\begin{proof}
For $g(\Si)=0$, the result follows from \cite{Greene}, therefore we assume $g(\Si)\geq 1$. 

Suppose $L$ is non-split and $D$ is an alternating diagram for $L$. Since $L$ has minimal genus,  $D$ is cellularly embedded. Further, $D$ is checkerboard colorable by \Cref{kamada}, and \Cref{BWdefinite} implies that the checkerboard surfaces $W$ and $B$ are positive and negative definite, respectively. This proves one direction, and it remains to prove the other. 

Suppose then that $P$ and $N$ are two definite spanning surfaces for $L$, with $P$ positive definite and $N$ negative definite.

Let $X_L=\Si\times I\sm \Int(\nu(L))$ be the exterior of  $L$. We write $\partial X_L = \partial_1 X_L \cup \cdots \cup \partial_m X_L$ according to the components of the link $L = K_1 \cup \cdots \cup K_m$. Clearly, each $\partial_i X_L$ is a torus. Assume further that,  $P$ and $N$ intersect transversely in $X_L$ such that the number of components of $P \cap N \cap X_L$ is minimized.

For $i=1,\ldots, m$ set $\la^P_i=P\cap \partial_i X_L$ and $\la^N_i=N\cap \partial_i X_L$. Thus $\la^P_i$ and $\la^N_i$ intersect transversely in $\partial_i X_L$. We further set
$\la^P = \bigcup_i \la^P_i$ and $\la^N = \bigcup_i \la^N_i.$

By \Cref{lemma3-4}, we can assume that $P\cap N\cap X_L$ does not contain any closed components.  Thus $P\cap N\cap X_L$ is a union of arc components which we call \textit{double arcs}. Each double arc connects a pair of distinct points in $\la^P\cap \la^N$. Thus $\la^P\cap \la^N$ consists of an even number of points, equal to twice the number of double arcs. Since $P$ is positive definite and $N$ is negative definite, the number of points in $\la^P_i\cap \la^N_i$ is equal to the difference in framings $\tfrac{1}{2}\lb [K_i]\rb_{P}-\tfrac{1}{2}\lb [K_i]\rb_{N}$. Summing over the components, we get that
$$\sum_{i=1}^m \left( \tfrac{1}{2}\lb [K_i]\rb_{P}-\tfrac{1}{2} \lb [K_i]\rb_{N} \right) = e(N)-e(P).$$
The number of arc components in
$P \cap N \cap X_L$ is therefore equal to $\frac{1}{2}(e(N)-e(P))$. 

An orientation on $X_L$ induces one on $\partial X_L$, and an orientation on each $K_i$ induces ones on $\la^P_i$ and $\la^N_i$. This defines a sign $\ep_x \in \{\pm 1\}$ for each point $x\in \la^P_i\cap \la^N_i$. The sign $\ep_x$ is positive if the orientation of $\la^P_i$ followed by the orientation of $\la^N_i$ at $x$ agree with the orientation of $\partial_i X_L$, and it is negative otherwise. A key point is that every point in $\la^P_i\cap \la^N_i$ has the same sign; this follows from the fact that 
$$\#(\la^P_i\cap \la^N_i)=\tfrac{1}{2}\lb [K_i]\rb_{P}-\tfrac{1}{2} \lb [K_i]\rb_{N}=\left|{\sum \ep_{x}}\right|,$$
where the sum on the right is taken over all $x \in \la^P_i\cap \la^N_i.$

As explained in \S 2 of \cite{Howie}, there are two kinds of double arcs; one is called a \textit{standard} double arc and the other is called a \textit{parallel} double arc. A double arc of $P \cap N$ with endpoints $x$ and $y$ is standard if $\ep_x = \ep_y$, and it is parallel if $\ep_x = -\ep_y.$ By the previous observations, we see that every double arc is standard. Each double arc extends to give an arc $a$ in $P\cap N$ with $\partial a=a\cap L$.
Since the double arc is standard, there is a neighborhood $V$ of $a$ modelled on the intersecting checkerboard surfaces in a standard neighborhood of a crossing in a link diagram. See Figures \ref{figure-8} and \ref{figure-9}. 

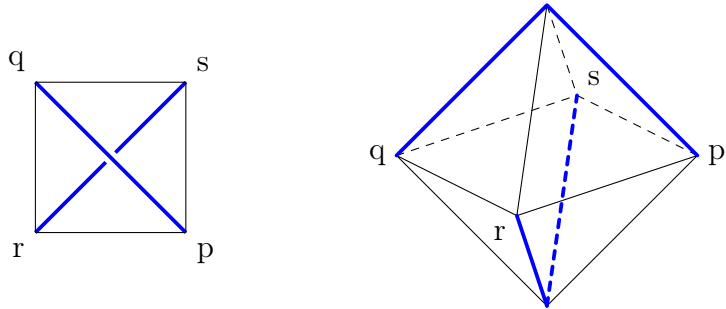
\begin{figure}
\centering 
\begin{tikzpicture}[baseline]
 \draw [line width=0.5mm, blue] (1,1) -- (3,3);
 \draw [white, line width=1ex] (3,1) -- (1,3);
 %\draw [lightgray] (0,0) grid (2,2);
 \draw [line width=0.5mm, blue] (3,1) -- (1,3);
 %\draw [thick] (0,0) -- (2,2);
 \draw [thin] (1,1) -- (1,3)node[above left] {q} --(3,3)node[above right] {s} --(3,1)node[below right] {p} --(1,1)node[below left] {r} ;
 \end{tikzpicture}
\qquad \qquad
 \vspace{1cm}
 \begin{tikzpicture}[z={(-.2cm,-.4cm)}, %z={(-.3cm,-.2cm)}, % direction z gets projected to; can also change x,y
                                       % use cm to specify non-transformed coordinates
                    line join=round, line cap=round % makes the corners look nicer
                   ]
  \draw (0,2,0) -- (-2,0,0) node[left] {q} -- (0,-2,0)  -- (2,0,0) node[right] {p}-- (0,2,0) -- (0,0,2) -- (0,-2,0) (2,0,0) -- (0,0,2)node[below left] {r} -- (-2,0,0);
  \draw[dashed] (0,2,0) -- (0,0,-2) (2,0,0) -- (0,0,-2) node[above right] {s}-- (-2,0,0);
\draw[line width=0.5mm, color=blue] (-2,0,0) --( 0,2,0) --(2,0,0);
\draw[line width=0.5mm, color=blue] (0,0,2) --( 0,-2,0);
\draw[line width=0.5mm, dashed,color=blue] (0,-2,0) -- (0,0,-2);
\end{tikzpicture} 
\vspace{-12mm}
\caption{\small  A standard crossing (left) placed on an octahedron (right).}\label{figure-8}
\end{figure}

\begin{figure}[ht]
\centering
\includegraphics[scale=0.85]{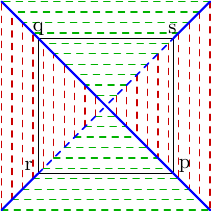}
\hspace{.4cm} 
\includegraphics[scale=0.85]{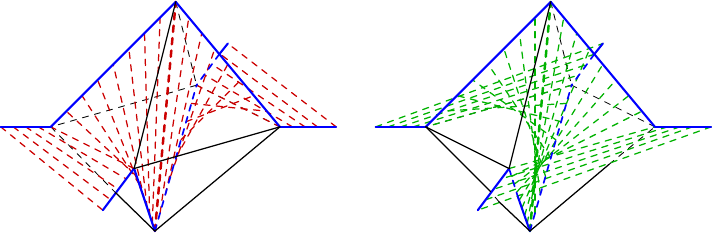}
\vspace{-2mm}
\caption{\small  The two ruled surfaces near a crossing. The heavily dashed vertical lines depict the standard arc of intersection of the two surfaces.}\label{figure-9}
\end{figure}

We can choose local coordinates near the crossings so that the arcs of the link lie in the $xy$ plane except at the crossings. 
At each crossing we place an octahedron  which intersects the $xy$ plane in the square with vertices $p=(1,-1,0)$, $q=(-1,1,0)$, $r=(-1,-1,0)$,  and $s=(1,1,0)$. We assume that the over-crossing arc connecting $p$  to $q$ is given by
$$\beta(t)=(1-2t,-1+2t,\text{min}(2t,2-2t))\ \ \text{for}\ \ 0\leq t\leq 1,$$
and the under-crossing arc connecting $r$ to $s$ is given by
$$\ga(t)=(-1+2t,-1+2t,\text{max}(-2t,-2+2t))\ \ \text{for}\ \ 0\leq t\leq 1.$$ 

In the standard crossing, the black surface is parametrized by
$$B(s,t)=\beta(t)(1-s)+s\cdot\ga(t)\ \ \text{for}\ \ 0\leq s,t\leq 1,$$
and the white surface is parametrized by  
$$W(s,t)=\beta(t)(1-s)+s\cdot\ga(1-t)\ \ \text{for}\ \ 0\leq s,t\leq 1.$$
Notice that, at the crossing, the black surface contains a left half-twist, whereas the white surface contains a right half-twist. The black and white surfaces intersect in the vertical arc $(0,0,1-2t),$ for $0\leq t \leq 1,$ which connects $(0,0,1)$ to $(0,0,-1)$. 

Thus, any standard arc $a$ has a neighborhood $V\subset \Si\times I$ such that $a$ is \textit{vertical} and the projection $p\colon \Si\times I\to \Si$ maps $(P\cup N\sm a)\cap V$ homeomorphically onto a once-punctured disk in $\Si$. 

Let $A$ denote the union of all the double arcs in $P\cap N\cap X_L$. \Cref{lemma3-4} implies that $P\cap N\cap X_L$ does not contain any simple closed curves. Thus it follows that $P\cup N\sm A$ is a two-dimensional manifold. Furthermore, collapsing the standard models of each of the double arcs $a$ in $A$ down, we see that $\nu(P\cup N)$ is homeomorphic to $\nu(S)$ for some connected surface $S$ embedded in $\Si\times I$. We can identify $\nu(S) \approx S\times I$ in such a way that the double arcs are all mapped to distinct points under projection $S\times I\to S$. 

Set $c=\frac{1}{2}(e(N)-e(P))$, which is equal to the number of arc components in $P\cap N \cap X_L.$
Therefore, $P \cap N = L \cup A$ and has Euler characteristic $-c.$

\bigskip
\noindent
{\textbf{Claim:}} $\chi(S)=\chi(\Si)$.

\bigskip \noindent
{\textit{Proof of the Claim.}} Since $\nu(S)=\nu(P\cup N)$, we have that
\begin{equation}  \label{claim}
\begin{split}
\chi(S)&=\chi(\nu(S))=\chi(\nu(P\cup N))=\chi(P\cup N), \\
&=\chi(P)+\chi(N)-\chi(P\cap N), \\ 
&=(1-b_{1}(P))+ (1-b_{1}(N))+c, \\
&=2-(b_1(P)+b_{1}(N)-c). 
\end{split}
\end{equation} 
On the other hand, computing the signature of $L$ using $P$ and $N$, we see that
\begin{equation*}
\begin{split}
\si_{P}(L)&=\sig(\cG_P)+\tfrac{1}{2}e(P, L),\\
\si_{N}(L)&=\sig(\cG_N)+\tfrac{1}{2}e(N, L).
\end{split} 
\end{equation*}
Thus, by \cref{eqn:Euler}, we have
\begin{equation} \label{sigma}
\begin{split}
\si_{P}(L)-\si_N(L)&=\sig(\cG_P)-\sig(\cG_N)+\tfrac{1}{2}(e(P,L)-e(N,L)),\\
&=\sig(\cG_P)-\sig(\cG_N)+\tfrac{1}{2}(e(P)-e(N)),\\
&=b_1(P)+b_{1}(N)-c.  
\end{split}
\end{equation}
Substituting \cref{sigma} into \cref{claim}, we see that
\begin{equation} \label{e-chi}
\chi(S)=2-(\si_{P}(L)-\si_N(L)).
\end{equation}

We will now show that $P$ and $N$ are not $S^*$-equivalent. Suppose to the contrary that $P$ and $N$ are $S^*$-equivalent. Then $\si_{P}(L)=\si_N(L)$, and the equations above give that $\chi(S)=2$. But since $\Si \times I$ is irreducible, any 2-sphere bounds a 3-ball. Therefore, the link $L$, which lies on $S$, must be contained in a 3-ball. This however contradicts our assumption that $L \subset \Si \times I$ has minimal genus $g(\Si)\geq 1.$
Thus, $P$ and $N$ cannot be $S^*$-equivalent.

Since $N$ is not $S^*$-equivalent to $P$, it follows that $N$ must be $S^*$-equivalent to $P\#_{\tau}\Si$. Thus
$$| \si_{P}(L)-\si_N(L)| =|\si_{P}(L)-\si_{P\#_{\tau}\Si}(L)| \leq 2g(\Si).$$
By \cref{e-chi}, we see that 
$$\chi(S) \geq 2 - 2g(\Si) = \chi(\Si).$$ 
If this inequality were strict, i.e., if $\chi(S) > \chi(\Si)$, then $g(S) < g(\Si)$ and it would follow that $[S]=0$ in $H_2(\Si \times I; \ZZ/2)$. However, that would imply that $P$ and $N$ are $S^*$-equivalent. Since they are not, we must have that $\chi(S)=\chi(\Si),$ and this completes the proof of the claim.

\Cref{lemmaA} applies to show that $S$ is incompressible. As explained in \Cref{Cor-to-Waldhausen}, by \cite{Waldhausen-1968}, this implies that
$S$ is isotopic to $\Si\times \{t_0\}$ for some $0<t_0<1$. 
Under the isotopy, $L$ is isotopic to a link $L'$ that lies in $\Si\times I'$, where $I'$ is a closed interval containing $t_0$ in its interior. 
%There is a deformation retraction from $\Si\times I$ to $\Si\times I'$ which carries $L$ to $L'$. 

Projecting $L'$ along $\nu(S)\approx S\times I\to S$, gives a diagram $D$ for $L'$ which by the claim has genus $g(S)=g(\Si)$. Furthermore, the checkerboard surfaces of $D$ on $S$ are isotopic relative the boundary to $P$ and $N$. \Cref{prop-BW} now implies that $D$ is alternating. This finishes the proof.
\end{proof} 

\setcounter{section}{5} \noindent
\subsection{Epilogue}
We begin with a short proof of \Cref{thm:main} (from the introduction). We then state a corollary, and use it to deduce \Cref{cor:mainv} (also from the introduction). We end with a few closing remarks.
 
\smallskip\noindent
\textit{Proof of \Cref{thm:main}}.
Suppose $L \subset \Si \times I$ is a non-split link with positive and negative definite spanning surfaces $P$ and $N$, respectively. If $g(\Si)=0,$ then it is obvious that $L \subset \Si \times I$
has minimal genus. If $g(\Si)\geq 1,$ then arguing as above, we see that $P$ and $N$ are not $S^*$-equivalent.

Since $L$ admits spanning surfaces, it is checkerboard colorable. Choose the coloring so that $N$ is $S^*$-equivalent to $B$ and $P$ is $S^*$-equivalent to $W$, where $B$ and $W$ denote the black and white surfaces, respectively. The
Gordon-Litherland pairings $\cG_N$ and $\cG_P$ are evidently non-singular, and by \Cref{nonsingular}, so are $\cG_B$ and $\cG_W$. Therefore, \Cref{minimal-genus} implies that $L \subset \Si \times I$ must have minimal genus, and \Cref{thm:main2} implies that $L$ is alternating.
\qed \medskip \noindent

\begin{corollary}\label{thm:main2-intro}
A link $L\subset \Si\times I$ in a thickened surface is alternating and has minimal genus if and only if $L$ bounds  definite spanning
surfaces of opposite sign.
\end{corollary}

\Cref{cor:mainv} is an immediate consequence of \Cref{thm:main2-intro} and \Cref{cor-min-g}. 

In Theorem 1.2 of \cite{Greene}, Greene uses his characterization to deduce that any two connected, reduced, alternating diagrams of the same classical link have the same crossing number and writhe. A key result is Theorem 5.5 of \cite{Greene}, which shows that two connected bridgeless planar graphs with isometric flow lattices have the same number of edges. 
In this way, Greene gave a new geometric approach to establishing the first two Tait conjectures. 

Building on this approach, Kindred recently gave a geometric proof of the Tait flype conjecture \cite{Kindred-2020}. 
The first two Tait conjectures have been extended to alternating links in thickened surfaces and alternating virtual links in \cite{Boden-Karimi, Boden-Karimi-Sikora}. In \cite{Boden-Karimi}, the results are deduced using the homological Jones polynomial \cite{Krushkal-2011}. In \cite{Boden-Karimi-Sikora}, stronger statements are obtained using adequacy of the Kauffman skein bracket.

It is an open problem whether Greene and/or Kindred's methods can be extended to links in thickened surfaces. It would be interesting to use their approach to give alternative, geometric proofs of all three Tait conjectures in the generalized setting.

%%%%%%%%%%%%%%%%%%%%%%%%%%%%%%%%%%%%%%%%%%%%%%%%%%%%%%%%%%%%%%%%%%%%%%%%%%%%%
 \subsection*{Acknowledgements} This paper is based on several ideas in the Ph.D. thesis of the second author \cite{Karimi}, and it addresses a question raised by Liam Watson.
 We would like to thank him, as well as  Andrew Nicas and Will Rushworth, for their input and feedback. We would also like to thank the referee for their many insightful comments and suggestions for improvement.

\bibliographystyle{halpha}    % a different bib style (requires the file halpha.bst)
\newcommand{\etalchar}[1]{$^{#1}$}

\end{document}